\newcommand{\ri}{\rightarrow }
\newcommand{\Fm}{\widetilde{F}}
\DeclareMathOperator{\Coker}{Coker} 
\DeclareMathOperator{\Hom}{Hom} 
\DeclareMathOperator{\Aut}{Aut} \DeclareMathOperator{\Ext}{Ext}
\DeclareMathOperator{\Ker}{Ker} 
\DeclareMathOperator{\Dis}{Dis} \DeclareMathOperator{\Ob}{Ob}
\newtheorem{theorem}{\bf Theorem}[section]
\newtheorem{lemma}[theorem]{\bf Lemma} 
\newtheorem{proposition}[theorem]{\bf Proposition} 
\newtheorem{corollary}[theorem]{\bf Corollary} 
\newtheorem{example}[theorem]{\bf Example} 
\newtheorem{definition}[theorem]{\bf Definition} 
\newtheorem{remark}[theorem]{\bf Remark} 
\begin{document}

\centerline{\Large\bf Braided equivariant crossed modules and}
\centerline{\Large\bf cohomology of $\Gamma $-modules}


 \vspace{0.5cm}
\centerline{\textsc{Nguyen. T. Quang$^{1,*}$, Che. T. K.
Phung$^{2}$, Pham. T. Cuc$^{3}$}}
 \vspace{0.5cm}
  \noindent\textit{ $^1$Department of Mathematics, Hanoi National University of
Education,  Vietnam}

\noindent \textit{$^2$Department of Mathematics and Applications,
Saigon University,  Vietnam}

\noindent \textit{$^3$Natural Science Department, Hongduc
University,  Vietnam}
\renewcommand{\thefootnote}{}

\footnote{$^*$ Corresponding author.  \textit{Email adrdresses:}
cn.nguyenquang@gmail.com (Nguyen. T. Quang), kimphungk25@yahoo.com
(Che. T. K. Phung), cucphamhd@gmail.com (Pham. T. Cuc)}

 \vspace{0.5cm}



\begin{abstract}
If $\Gamma $ is a group, then braided $\Gamma $-crossed modules are
classified by  braided strict $\Gamma $-graded categorial groups.
The Schreier theory obtained for $\Gamma $-module extensions of the
type of an abelian $\Gamma $-crossed module is a generalization of
the  theory of $\Gamma $-module extensions.
\end{abstract}

\noindent{\small{\bf 2010 Mathematics Subject Classification:}
18D10, 20J05, 20J06, 20E22}

\noindent {\small{\bf Keywords:} braided $\Gamma $-crossed module,
braided strict graded  categorical group,  $\Gamma $-module
extension, symmetric cohomology}

\section{Introduction}
Crossed modules have been used widely, and in various contexts, since their definition by Whitehead \cite{White49}
in his investigation of the algebraic structure of second relative homotopy groups.
Brown and Spencer \cite{Br76} showed that crossed modules are determined  by \emph{$\mathcal G$-groupoids} (or {\it strict categorial groups}), and hence  crossed modules can be studied by the theory of category.
Thereafter, Joyal and Street \cite{J-S} extended the result in \cite{Br76} for \emph{ braided} crossed modules and \emph{braided} strict categorial groups. A {\it braided strict categorial group} is a braided categorical group in which the unit, associativity constraints are strict
 and every object is invertible ($x\otimes y=1=y\otimes x$).

 A brief summary of researches
 related to crossed modules was given in \cite{CCG} by  Carrasco et al. Results on the category of \emph{abelian} crossed modules appeared in this work. Previously, the notion of abelian crossed module was characterized by that of the \emph{center} of a crossed module in  the paper of  Norrie  \cite{Norrie}.

In \cite{Fro},  Fr\"{o}hlich and Wall  introduced the notion of
graded categorical group.
 Thereafter,
 Cegarra and Khmaladze   constructed the abelian (symmetric) cohomology of $\Gamma $-modules
  which was applied on the classification for braided (symmetric) $\Gamma $-graded categorical
  groups in \cite{CK} (\cite{CK2007}).

The purpose of this paper is to study kinds of crossed modules which
are defined by braided strict $\Gamma $-graded  categorical groups.
This result is an extension of the result of  Joyal and Street
mentioned above. After this introductory Section 1, Section 2 is
devoted to recalling some necessary fundamental notions and results
of braided (symmetric) graded categorical groups and factor sets of
braided graded categorical groups. In Section 3 we show that the
category ${\bf BrGr^\ast}$ of braided strict categorical groups and
regular symmetric monoidal functors is equivalent to the category
\textbf{BrCross} of braided crossed modules (Theorem \ref{pl}). Each
morphism in the   category ${\bf BrCross}$ consists of a
homomorphism $(f_1,f_0):\mathcal M\ri \mathcal M'$ of braided
crossed modules  and an element of the group of abelian 2-cocycles
$Z^2_{ab}(\pi_0\mathcal M, \pi_1\mathcal M')$ in the sense of
\cite{E-M}. This result is a continuation of  the result in
\cite{J-S} (Remark 3.1). It is obtained as a consequence of
Classification Theorem \ref{dlpl}.

In Section 4 we extend the result in Section 3 to graded structures
by introducing
 the notions of {\it braided $\Gamma$-crossed module} and
  \emph{braided strict $\Gamma $-graded  categorical group} to classify braided
  $\Gamma$-crossed modules (see \cite{N}).
   Theorem  \ref{dlpl} states that
 the category ${\bf_{\Gamma}BrGr^\ast}$ of braided strict $\Gamma $-graded  categorical groups and
regular $\Gamma $-graded symmetric  monoidal functors is equivalent
to the category $\bf {_{\Gamma}BrCross}$ of braided $\Gamma
$-crossed modules.  Each morphism in the   category $\bf
{_{\Gamma}BrCross}$ consists of a homomorphism $(f_1,f_0):\mathcal
M\ri \mathcal M'$ of braided $\Gamma$-crossed modules  and an
element of the group of symmetric 2-cocycles
$Z^2_{\Gamma,s}(\pi_0\mathcal M, \pi_1\mathcal M')$ in the sense of
\cite{CK}.

The problem of group extensions of the type of a crossed module has
been mentioned in \cite{Tay, Ded, Br94}. In Section 5 we show a
treatment of the similar problem for  $\Gamma$-module extensions of
the type of an abelian $\Gamma$-crossed module. The Schreier theory
for such extensions (Theorem \ref{schr}) is presented by means of
graded symmetric  monoidal functors, and therefore we obtain the
classification theorem of $\Gamma $-module extensions  of the type
of an abelian $\Gamma$-crossed module (Theorem \ref{dlc}).

The case of (non-braided)  $\Gamma$-cossed modules  is studied by
Quang and Cuc in \cite{Q-C}. The results   generalizes both the
theory of group extensions of the type of a crossed module and the
one of equivariant group extensions.

\section{Preliminaries}

\subsection{Braided (symmetric) graded categorical groups}
Let $\Gamma$ be a fixed group, which we regard as a category with
exactly one object, say $*$, where the morphisms are the members of
$\Gamma $ and the composition law is the group operation. A {\it
grading} on a category $\mathbb G$ is then  a functor, say
$gr:\mathbb G\ri \Gamma$.
 For any morphism $u$ in $\mathbb G$ with $gr(u)=\sigma$, we refer to $\sigma$ as the {\it grade} of $u$.
The grading $gr$ is said to be \emph{stable} if for any
$X\in$ Ob$\mathbb G$ and any $\sigma\in \Gamma$ there exists an  isomorphism
$u$ in $\mathbb G$ with domain  $X$ such that $gr(u)=\sigma$.

 A  \emph{braided $\Gamma $-graded monoidal category} \cite{CK}  $\mathbb G=(\mathbb G, gr, \otimes ,$ $I,  \textbf{a, r, l, c})$ consists of a category $\mathbb G$, a stable grading  $gr: \mathbb G \rightarrow \Gamma ,$ graded functors $\otimes : \mathbb G\times _\Gamma \mathbb G\rightarrow \mathbb G$ and $I: \Gamma \rightarrow \mathbb G$, and graded natural equivalences defined by isomorphisms of grade 1  ${\bf a}_{X,Y,Z}: (X\otimes Y)\otimes Z
\stackrel{\sim}{\rightarrow}
 X\otimes (Y\otimes Z ),{\bf l}_X: I\otimes X \stackrel{\sim}{\rightarrow} X,
 {\bf r}_X: X\otimes I \stackrel{\sim}{\rightarrow} X$ and ${\bf c}_{X,Y}: X\otimes
  Y\stackrel{\sim}{\rightarrow} Y\otimes X$ satisfying the following coherence conditions:
\begin{equation*}
{\bf a}_{X,Y,Z\otimes T}{\bf a}_{X\otimes Y, Z,T}= (id_X \otimes
{\bf a}_{Y,Z,T})
{\bf a}_{X,Y\otimes Z,T}({\bf a}_{X,Y,Z}\otimes id_T), \label{2.1}
 \end{equation*}
 \begin{equation*}
(id_X \otimes {\bf l}_Y){\bf a}_{X,I,Y}= {\bf r}_X \otimes id_Y, \label{2.2}
 \end{equation*}
 \begin{equation}
(id_Y \otimes {\bf c}_{X,Z}) {\bf a}_{Y,X,Z}({\bf c}_{X,Y}\otimes
id_Z)={\bf a}_{Y,Z,X}
{\bf c}_{X,Y\otimes Z} {\bf a}_{X,Y,Z}, \label{2.3}
 \end{equation}
 \begin{equation}
({\bf c}_{X,Z}\otimes id_Y){\bf a}^{-1}_{X,Z,Y}(id_X \otimes {\bf
c}_{Y,Z}) = {\bf a}_{Z,X,Y}^{-1}{\bf c}_{X\otimes Y,Z}{\bf
a}^{-1}_{X,Y,Z}. \label{2.4}
 \end{equation}
 A {\it braided $\Gamma $-graded categorical group} \cite{CK} is a braided $\Gamma $-graded monoidal groupoid  such that, for
 any object $X$, there is an object
  $X'$ with an arrow $X\otimes X'\ri 1$ of grade 1. If the braiding ${\bf c}$ is a symmetric constraint, that is, it satisfies the condition ${\bf c}_{Y,X}\circ
 {\bf c}_{X,Y}=id_{X\otimes Y}$ (in this case  the relation (\ref{2.4}) coincides with the relation (\ref{2.3})), then $\mathbb G$ is called a {\it symmetric} $\Gamma $-graded categorical group
 or a {\it graded Picard category}  \cite{CK2007}. Then the subcategory
 $\Ker \mathbb G$ (whose objects are the objects of $\mathbb G$ and morphisms are the morphisms of grade 1 in  $\mathbb G$)
  is a  braided categorical group (a Picard category, respectively).

Let  $(\mathbb G,gr)$ and $(\mathbb G',gr')$ be two (braided
symmetric) $\Gamma $-graded categorical groups. A {\it graded
symmetric monoidal functor}
 from $(\mathbb G,gr)$ to $(\mathbb G',gr')$ is a triple
$(F,\widetilde{F},F_\ast)$, where
 $F : (\mathbb G,gr) \rightarrow (\mathbb G',gr')$ is a
 $\Gamma$-graded functor,
 $\widetilde{F}_{X,Y} :FX \otimes FY \rightarrow F(X\otimes Y)$
are natural isomorphisms of grade 1 and
 $F_\ast : I' \rightarrow FI$  is an isomorphism of grade 1, such that
  the following coherence conditions hold:
\begin{equation*}
\widetilde{F}_{X,Y\otimes Z}(id_{FX}\otimes \widetilde{F}_{Y,Z})
{\bf a}_{FX,FY,FZ}= F({\bf a}_{X,Y,Z})\widetilde{F}_{X\otimes
Y,Z}(\widetilde{F}_{X,Y}\otimes id_{FZ}), \label{6.5}
\end{equation*}
\begin{equation*}
F({\bf r}_X)\widetilde{F}_{X,I}(id_{FX}\otimes F_\ast)= {\bf
r}_{FX},\
 F({\bf l}_X)\widetilde{F}_{I,X}(F_\ast\otimes id_{FX}) = {\bf l}_{FX}, \label{6.6}
 \end{equation*}
 \begin{equation*}
\widetilde{F} _{Y,X}{\bf c}_{FX,FY}=F({\bf
c}_{X,Y})\widetilde{F}_{X,Y}. \label{6.7}
\end{equation*}

Let $(F,\widetilde{F},F_\ast), (F',\widetilde{F}',F'_\ast)$
be two graded symmetric monoidal functors. A {\it
graded symmetric monoidal natural equivalence}
$\theta:F\stackrel{\sim}{\rightarrow} F'$ is a graded natural equivalence such that, for all objects $X,Y$ of $\mathbb G$, the following coherence conditions hold
\begin{align}\label{3.4}
\widetilde{F}'_{X,Y}(\theta_X\otimes\theta_Y)=\theta_{X\otimes
Y}\widetilde{F}_{X,Y},\;\theta_I F_\ast=F'_\ast,
\end{align}
that is, a monoidal natural equivalence.

\subsection{Braided (symmetric) graded categorical groups of type $(M,N)$ and the theory of obstructions}

Let $\mathbb G$ be a  braided $\Gamma $-graded categorical group. We
write $M = \pi_0(\Ker \mathbb G)$ $= \pi_0\mathbb G$ for the abelian
group of 1-isomorphism classes of the objects in  $\mathbb{G}$ and
$N=\pi_1(\Ker \mathbb G)=\pi_1\mathbb G$ for the abelian group of
1-automorphisms of the unit object of $\mathbb G$. Then $\mathbb G$
induces  $\Gamma $-module structures on $M,N$ and a normalized
3-cycocle  $h\in Z^3_{\Gamma,ab}(M,N)$ in the sense of  \cite{CK}.
From these data, the authors of  \cite{CK} constructed a braided
$\Gamma $-graded categorical group, denoted by $\mathbb G(h)$ (or
$\int_{\Gamma}(M,N,h)$), which is equivalent to $\mathbb G$. Below,
we briefly recall  this construction.

The objects of $\mathbb G(h)$ are the elements $s\in M$ and their arrows are pairs $(a,\sigma):r\ri s$ consisting of an element $a\in
N$ and an element $\sigma\in\Gamma$ with $\sigma r=s$.

The composition of two morphisms $(r\xrightarrow{(a,\sigma) }s
\xrightarrow{(b,\tau )}t)$ is defined by
\begin{align*}\label{ht}(b,\tau )\circ(a,\sigma)=(b+\tau a+h(r,\tau,\sigma),\tau\sigma).  \end{align*}

The graded tensor product
is defined by
\begin{align*} (r\stackrel{(a,\sigma) }{\rightarrow}s )\otimes (r'\stackrel{(b,\sigma) }{\rightarrow} s')=
(rr'\xrightarrow{(a+sb+h(r,r',\sigma),\sigma) }ss' ).
\end{align*}

The unit constraints are strict,
$\mathbf{l}_s=(0,1)=\mathbf{r}_s:s\ri s$.
 The associativity and braiding constraints  are, respectively, given by
\[\mathbf{a}_{r,s,t}=(h(r,s,t),1):(rs)t\ri r(st),\]
\[\mathbf{c}_{r,s}=(h(r,s),1): rs \rightarrow sr.\]
The stable $ \Gamma$-grading is defined by
 $gr(a,\sigma)=\sigma$. The unit graded functor $I: \Gamma \rightarrow
\mathbb G(h)$ is defined by
\begin{align*} I(*\xrightarrow{\sigma } *)=(1\xrightarrow{(0,\sigma) } 1). \end{align*}
We call  $\mathbb G(h)$ a
 {\it reduced} braided $\Gamma $-graded categorical group of  $\mathbb G$.
  In the case when $\mathbb G$ is a $\Gamma $-graded Picard category, then $h\in Z^3_{\Gamma ,s}$ in the sense of
  \cite{CK2007} and $\mathbb G(h)$ is a  $\Gamma $-graded Picard category.

Let $\mathbb G$, $ \mathbb G'$ be $\Gamma $-graded Picard
categories, and let $\mathbb  G(h)= \int_{\Gamma}(M,N,h)$, $\mathbb
G'(h')= \int_{\Gamma}(M',N',h')$ be their reduced $\Gamma $-graded
Picard categories, respectively. A graded functor  $F: \mathbb
G(h)\ri \mathbb   G'(h')$ is said to be \emph{of type $(\varphi,f)$}
if
\begin{equation*}
F(s)=\varphi(s),\ \ F(a,\sigma)=(f(a),
\sigma),\;\;s\in M,\;a\in N,\;\sigma\in\Gamma,
\end{equation*}
 where $\varphi:M\ri M';\; f: N\rightarrow N'$ are homomorphisms of $\Gamma $-modules. Then the function
  \begin{equation*}
 k=\varphi^{\ast}h'-f_{\ast}h
 \end{equation*}
 is called an  {\it obstruction} of  the functor $F.$


Based on the results on monoidal functors of  type
$(\varphi,f)$ presented in
\cite{QCT}, we obtain the following results with some appropriate modifications.

  \begin{proposition}\label{dl2.1a}
Let $\mathbb G, \mathbb G'$ be  braided $\Gamma $-graded categorical groups, and let $\mathbb  G(h)$, $\mathbb   G'(h')$ be their reduced braided $\Gamma $-graded categorical groups, respectively.\\
\indent \emph{i)}  Any graded symmetric monoidal functor $(F, \Fm): \mathbb G\rightarrow \mathbb G'$
induces a  graded symmetric  monoidal functor $\mathbb  G(h)\rightarrow \mathbb  G'(h')$ of type $(\varphi,f)$.\\
\indent \emph{ii)} Any   graded symmetric  monoidal functor $\mathbb
G(h)\rightarrow \mathbb  G'(h')$ is a
 graded functor of type $ ( \varphi, f).$
\end{proposition}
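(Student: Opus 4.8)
The plan is to prove both parts by directly transporting structure along the equivalences $\mathbb{G}\simeq\mathbb{G}(h)$ and $\mathbb{G}'\simeq\mathbb{G}'(h')$ supplied by the construction of \cite{CK}, and then reading off what a graded symmetric monoidal functor between the reduced categories must look like. For part (ii) I would start with an arbitrary graded symmetric monoidal functor $(F,\widetilde F,F_\ast):\mathbb{G}(h)\ri\mathbb{G}'(h')$. On objects, $F$ sends $s\in M$ to some $F(s)\in M'$; since $F$ carries grade-1 isomorphisms to grade-1 isomorphisms and preserves $\otimes$, the assignment $s\mapsto F(s)$ descends to a map $\varphi=\pi_0 F:M\ri M'$, which is a group homomorphism because $\widetilde F$ identifies $F(s)\otimes F(s')$ with $F(s\otimes s')$. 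On automorphisms of the unit, $F$ restricted to $\Ker$ induces $f=\pi_1 F:N\ri N'$, again a homomorphism. The fact that $\varphi,f$ are $\Gamma$-\emph{module} maps is where the grading enters: given $a\in N$ and $\sigma\in\Gamma$, the arrow $(a,\sigma):r\ri\sigma r$ in $\mathbb{G}(h)$ is sent by the $\Gamma$-graded functor $F$ to an arrow of the same grade $\sigma$, hence of the form $(f(a,\sigma,r),\sigma):\varphi(r)\ri\varphi(\sigma r)=\sigma\varphi(r)$; comparing the last equality forces $\varphi$ to be $\Gamma$-equivariant, and a short normalization/naturality argument (using that $F$ is a functor, so it respects composition, together with the cocycle identity for $h$) shows $f(a,\sigma,r)$ is independent of $\sigma$ and $r$ and additive in $a$, i.e. $F(a,\sigma)=(f(a),\sigma)$ with $f$ a $\Gamma$-module homomorphism. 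That is exactly the statement that $F$ has type $(\varphi,f)$.

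For part (i) the idea is to produce the composite $\mathbb{G}(h)\xrightarrow{\;\simeq\;}\mathbb{G}\xrightarrow{(F,\widetilde F)}\mathbb{G}'\xrightarrow{\;\simeq\;}\mathbb{G}'(h')$, where the outer arrows are the canonical equivalences (which are themselves graded symmetric monoidal functors, being part of the structure recalled from \cite{CK}). This composite is a graded symmetric monoidal functor $\mathbb{G}(h)\ri\mathbb{G}'(h')$, so by part (ii) it is of type $(\varphi,f)$ for some pair of $\Gamma$-module homomorphisms, and one checks that $\varphi=\pi_0 F$ and $f=\pi_1 F$ are precisely the maps induced by $F$ on the invariants $\pi_0,\pi_1$ — this follows because the equivalences $\mathbb{G}\simeq\mathbb{G}(h)$ are the identity on $\pi_0$ and $\pi_1$ by construction. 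Thus any graded symmetric monoidal functor on the original categories gives rise, canonically up to graded symmetric monoidal natural equivalence, to one of type $(\varphi,f)$ on the reduced models.

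The routine verifications are the coherence pentagon/hexagon bookkeeping for the composite functor and the repeated use of the normalization conventions ($h$ normalized, strict unit constraints) to kill boundary terms; I will only indicate these. The step I expect to be the genuine obstacle is the rigidity argument inside part (ii) showing that $F(a,\sigma)$ is forced to have the separated form $(f(a),\sigma)$ with $f$ both additive and $\Gamma$-equivariant: a priori $F$ on a grade-$\sigma$ arrow could mix the $N$-component and the $\Gamma$-component in a $\sigma$- and $r$-dependent way, and ruling this out requires simultaneously exploiting that $F$ preserves composition (bringing in $h$ and $h'$ via the composition formula $(b,\tau)\circ(a,\sigma)=(b+\tau a+h(r,\tau,\sigma),\tau\sigma)$), that $\widetilde F$ has grade $1$ and satisfies its hexagon, and that $F$ preserves the braiding $\mathbf c_{r,s}=(h(r,s),1)$. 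This is where the argument from \cite{QCT} on monoidal functors of type $(\varphi,f)$ has to be adapted, and the "appropriate modifications" alluded to before the proposition are precisely the extra braiding-compatibility checks needed in the $\Gamma$-graded symmetric setting.
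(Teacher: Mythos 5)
The paper itself offers no argument for this proposition (it is imported from \cite{QCT} ``with some appropriate modifications''), so your proposal has to stand on its own. Its architecture is the standard one, and the routine parts go through as you say: the objects of $\mathbb G(h)$ are already the elements of $M$, and since a grade-$1$ arrow of $\mathbb G'(h')$ exists only between \emph{equal} objects, the grade-$1$ isomorphism $\widetilde F_{r,s}$ forces $F(r)\otimes F(s)=F(rs)$ on the nose, so $\varphi$ is a homomorphism; the existence of the grade-$\sigma$ arrow $F\bigl((0,\sigma):r\rightarrow\sigma r\bigr)$ forces $\varphi(\sigma r)=\sigma\varphi(r)$; and the identity $(0,\sigma)\circ(a,1)=(\sigma a,1)\circ(0,\sigma)$ in $\mathbb G(h)$ gives $f(\sigma a)=\sigma f(a)$ independently of what $F(0,\sigma)$ is. Part (i) by composing with the canonical equivalences and citing part (ii) is also the right move.

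The genuine gap sits exactly at the step you flag and postpone: the ``rigidity'' claim that $F(a,\sigma)$ is forced to have the separated form $(f(a),\sigma)$. Factoring $(a,\sigma)=(a,1)\circ(0,\sigma)$ reduces this to showing $F(0,\sigma)=(0,\sigma)$, and that is simply not a consequence of the axioms: writing $F\bigl((0,\sigma):r\rightarrow\sigma r\bigr)=(g(r,\sigma),\sigma)$, functoriality and the naturality of $\widetilde F$ only \emph{relate} $g$ to the components of $\widetilde F$ and $F_\ast$, they do not annihilate it. A concrete obstruction: take $M=M'=0$, $h=h'=0$, $N=N'$ a nontrivial $\Gamma$-module, $F$ the identity on the unique object, and $F_\ast=(\nu,1)$ with $\nu$ not $\Gamma$-invariant; the unit axiom then forces $\widetilde F_{1,1}=(-\nu,1)$, and naturality of $\widetilde F$ against $(0,\sigma)\otimes(0,\sigma)=(0,\sigma)$ forces $F(0,\sigma)=(\nu-\sigma\nu,\sigma)\neq(0,\sigma)$. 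This is a perfectly good graded symmetric monoidal functor that is not of type $(\varphi,f)$ in the literal sense. What is true --- and what Proposition \ref{dl2.2a} and the applications in Section 5 actually require --- is that every graded symmetric monoidal functor $\mathbb G(h)\rightarrow\mathbb G'(h')$ is graded monoidally naturally equivalent to one of type $(\varphi,f)$ (in the example above, the homotopy $\theta=(-\nu,1)$ does the job and simultaneously normalizes $\widetilde F$ and $F_\ast$). So you should either add the normalization hypotheses that kill $g$ from the start, or prove and use the statement ``up to homotopy''; as written, your step (ii) would fail, and with it the reduction of (i) to (ii).
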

\begin{proposition} [\cite{CK2007}, Theorem 3.9] \label{dl2.2a} The graded functor  $(F,\widetilde{F}): \mathbb   G(h)\ri \mathbb   G'(h') $ of type
$(\varphi, f)$ is  realizable, that is,
 there are isomorphisms $\widetilde{F}_{x,y} $ so that $(F,\widetilde{F} )$ is  a graded symmetric monoidal functor, if and only if its obstruction $\overline{k} $ vanishes in  $H^3_{\Gamma,s}(M, N')$. Then, there is a bijection
 \begin{equation*}
     {\mathrm{Hom}}_{(\varphi, f)}[\mathbb   G(h), \mathbb   G'(h')]\leftrightarrow H^2_{\Gamma,s}(M,
 N'),
\end{equation*}
where $\Hom_{(\varphi, f)}[\mathbb G(h), \mathbb G'(h')]$
 denotes the set
 of homotopy classes of  graded symmetric   monoidal functors of type $(\varphi ,f)$ from
$\mathbb G(h)$ to $\mathbb G'(h')$.
\end{proposition}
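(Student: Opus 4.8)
The plan is to exploit the completely explicit nature of the reduced models $\mathbb G(h)=\int_\Gamma(M,N,h)$ and $\mathbb G'(h')=\int_\Gamma(M',N',h')$ and to reduce the whole statement to identities between cochains. First I would observe that, since $F$ has type $(\varphi,f)$ and every grade-$1$ isomorphism $\varphi(x)\varphi(y)=\varphi(xy)\to\varphi(xy)$ in $\mathbb G'(h')$ is of the form $(n,1)$ with $n\in N'$, a graded monoidal structure $\widetilde F$ on $F$ is the same datum as a function $\mu\colon M\times M\to N'$ with $\widetilde F_{x,y}=(\mu(x,y),1)$, together with $F_\ast=(0,1)$; after the usual normalization one may assume $\mu(1,y)=\mu(x,1)=0$. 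So the problem becomes: for which $\mu$ do the coherence conditions for a graded symmetric monoidal functor hold?

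Next I would carry out that translation. Substituting into the three coherence axioms the formulas for composition, graded tensor product, $\mathbf a_{r,s,t}=(h(r,s,t),1)$, $\mathbf l$, $\mathbf r$ and $\mathbf c_{r,s}=(h(r,s),1)$ in the two reduced categories, the associativity (pentagon) axiom becomes $\delta\mu=\varphi^{\ast}h'-f_{\ast}h=k$ for the appropriate $2$-cochain coboundary $\delta$, the unit axioms hold automatically by normalization, and the braiding axiom $\widetilde F_{Y,X}\mathbf c_{FX,FY}=F(\mathbf c_{X,Y})\widetilde F_{X,Y}$ contributes exactly the commutativity-type relation that forces $\mu$ to be a cochain of the \emph{symmetric} complex of \cite{CK2007} rather than of the bare bar complex. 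Since pullback along $\varphi$ and pushout along $f$ are chain maps, $k=\varphi^{\ast}h'-f_{\ast}h$ is automatically a $3$-cocycle; hence a normalized $\mu$ with $\delta\mu=k$ exists if and only if $k$ is a coboundary, i.e. if and only if $\overline k=0$ in $H^3_{\Gamma,s}(M,N')$. This gives the realizability criterion.

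For the bijection, fix one solution $\mu_0$ (which exists once $\overline k=0$) and let $\mu$ be an arbitrary one; then $\mu-\mu_0$ is a $2$-cocycle of the symmetric complex. I would then show that two monoidal structures $(F,\mu)$ and $(F,\mu')$ are connected by a graded symmetric monoidal natural equivalence if and only if $\mu-\mu'$ is a $2$-coboundary: such a natural equivalence is determined by a function $t\colon M\to N'$ via $\theta_s=(t(s),1)$, and its naturality on morphisms of arbitrary grade together with the monoidal conditions (\ref{3.4}) translate precisely into $t\in C^1_{\Gamma,s}$ and $\mu'-\mu=\delta t$. Consequently the assignment $(F,\widetilde F)\mapsto[\mu-\mu_0]$ is a well-defined map from homotopy classes of graded symmetric monoidal functors of type $(\varphi,f)$ to $H^2_{\Gamma,s}(M,N')$; surjectivity follows by adding an arbitrary $2$-cocycle to $\mu_0$, and injectivity is exactly the coboundary criterion just established.

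The main obstacle will be the careful handling of the $\Gamma$-grading. The composition and tensor-product laws of $\mathbb G(h)$ and $\mathbb G'(h')$ carry $h$- and $h'$-terms indexed by elements $\sigma\in\Gamma$, and it is the naturality squares of $\widetilde F$ and $\theta$ against morphisms of arbitrary grade that upgrade the bare functions $\mu$ and $t$ to equivariant cochains in the precise sense of $Z^n_{\Gamma,s}$; matching these normalization and equivariance conventions to the definition of the symmetric cochain complex of \cite{CK2007}, and checking the lengthy but routine identities produced by the pentagon and hexagon axioms, is where the actual work lies. Modulo that bookkeeping the argument is a standard obstruction-theoretic computation, and, as the paragraph preceding the statement indicates, one can alternatively deduce it from the results on monoidal functors of type $(\varphi,f)$ in \cite{QCT} by imposing the symmetry constraint.
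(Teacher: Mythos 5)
First, a point of comparison: the paper itself offers no proof of this proposition --- it is quoted from \cite{CK2007} (Theorem 3.9), with Proposition \ref{dl2.1a} adapted from \cite{QCT} --- so your cocycle-by-cocycle obstruction computation is in spirit the proof of the cited source rather than an alternative to anything in this paper.

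However, as written your reduction has a genuine gap in the equivariant bookkeeping, precisely at the place you flag as ``routine''. In the symmetric cohomology of $\Gamma$-modules of \cite{CK,CK2007}, a $2$-cochain is not just a map $M\times M\to N'$: it also has a component on $M\times\Gamma$, and the obstruction $k=\varphi^{\ast}h'-f_{\ast}h$ has components on $M^2\times\Gamma$ and $M\times\Gamma^2$ besides those on $M^3$ and $M^2$. This is visible in the paper's own Lemma \ref{t1} and Theorem \ref{dlpl}, where the function attached to a graded monoidal functor is defined on $D^2\cup(D\times\Gamma)$, the second part recording the values $F(x\xrightarrow{(0,\sigma)}\sigma x)=(\mu(x,\sigma),\sigma)$. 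If, as in your first step, you freeze $F(a,\sigma)=(f(a),\sigma)$ on all graded morphisms and let the only unknown be $\mu\colon M\times M\to N'$, then mere functoriality of $F$ already forces $f(h(r,\tau,\sigma))=h'(\varphi r,\tau,\sigma)$, i.e.\ the $M\times\Gamma^2$-component of $k$ to vanish identically, and naturality of $\widetilde F$ against grade-$\sigma$ morphisms pins its $M^2\times\Gamma$-component to an expression in $\mu$ alone. Consequently ``there exists $\mu$ on $M^2$ with $\delta\mu=k$'' is strictly stronger than ``$\overline k=0$ in $H^3_{\Gamma,s}(M,N')$'': the converse direction of your claimed equivalence fails, because a trivializing $2$-cochain for $k$ will in general have a nonzero $M\times\Gamma$-part, which can only be absorbed by changing $F$ on the graded morphisms --- data your setup has eliminated. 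The same issue undermines the $H^2_{\Gamma,s}$ bijection: surjectivity requires adding $2$-cocycles whose $M\times\Gamma$-component is nonzero (again altering $F$ on graded morphisms), and the coboundary condition on a homotopy $t\colon M\to N'$ includes the equation $\sigma t(x)-t(\sigma x)=\mu'(x,\sigma)-\mu(x,\sigma)$, invisible when both functors are strictly of type $(\varphi,f)$. Naturality squares do not ``upgrade'' $\mu$ and $t$ to equivariant cochains, as your final paragraph suggests; the equivariant components are additional data, namely the effect of the functor on the morphisms $(0,\sigma)$, and the correct argument (as in \cite{CK2007}, and as carried out in Lemma \ref{t1} of this paper in the strict situation) must treat ``type $(\varphi,f)$'' as constraining only the induced maps on $\pi_0$ and $\pi_1$, with $\mu$ a full cochain on $M^2\cup(M\times\Gamma)$.
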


Note that $H^2_{\Gamma,s}(M,N')=H^2_{\Gamma,ab}(M, N')$.
\subsection{ Factor sets in braided graded categorical groups}

According to the  definition of a factor set with coefficients in a  monoidal category
\cite{C2001}, we now establish the following terminology.
\begin{definition}  \emph{A} {\it symmetric factor set} \emph{$\mathcal F$ on $\Gamma$ with coefficients in a  braided categorical group $\mathbb G$
(or a pseudofunctor from $\Gamma$ to the category of braided
categorical groups in the sense of Grothendieck \cite{Gro}) consists
of a family of symmetric  monoidal  auto-equivalences
$F^{\sigma}:\mathbb G\rightarrow \mathbb G, \sigma\in\Gamma
 $, and isomorphisms between symmetric monoidal functors
$\theta^{\sigma,\tau} : F^{\sigma} F^{\tau} \rightarrow
F^{\sigma\tau} $, $ \sigma,\tau\in\Gamma$ satisfying the
conditions:}

 \emph{i)} $F^{1} = id_\mathbb G$,

 \emph{ii)} $ \theta^{1,\sigma} = id_{F^{\sigma}} = \theta^{\sigma,1}$, $\sigma \in\Gamma$,

 \emph{iii)} \emph{for all $ \sigma,\tau,\gamma\in\Gamma$, the following diagram
 commutes}
\begin{equation*}\label{ght3}\begin{CD}
F^{\sigma} F^{\tau} F^{\gamma} @> \theta^{\sigma,\tau} F^{\gamma} >> F^{\sigma\tau}F^{\gamma}\\
@V F^{\sigma}\theta^{\tau,\gamma} VV           @VV \theta^{\sigma\tau,\gamma}  V\\
F^{\sigma}F^{\tau\gamma} @> \theta^{\sigma,\tau\gamma}>>
F^{\sigma\tau\gamma}.
\end{CD}.\end{equation*}
\end{definition}
We write $\mathcal F=(\mathbb G,F^\sigma,\theta^{\sigma,\tau})$, or simply $(F,\theta)$.


The following lemma comes from an analogous result on graded
monoidal categories  \cite{C2001} or a part of Theorem 1.2 \cite{U}.
 We sketch the proof since we need some of its details.
 \begin{lemma}\label{bd01} Any braided $\Gamma $-graded categorical group
$(\mathbb{G},gr)$ determines a symmetric factor set   $\mathcal F $
on $\Gamma$ with coefficients in a  braided categorical group
$\Ker\mathbb G$.
\end{lemma}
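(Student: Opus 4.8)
The plan is to extract, from the stable $\Gamma$-grading of $\mathbb G$, a choice of ``translation'' equivalences along each element of $\Gamma$ and to package these into the data $(F^\sigma,\theta^{\sigma,\tau})$ of a symmetric factor set on $\Gamma$ with coefficients in $\Ker\mathbb G$. First I would use stability of $gr$ to pick, for every object $X$ of $\Ker\mathbb G$ and every $\sigma\in\Gamma$, an isomorphism $\gamma_\sigma^X\colon X\to \sigma X$ in $\mathbb G$ with $gr(\gamma_\sigma^X)=\sigma$, normalized so that $\gamma_1^X=id_X$ and, using the unit graded functor $I$, so that $\sigma I\cong I$ strictly. Setting $F^\sigma(X)=\sigma X$ on objects and $F^\sigma(u)=\gamma_\sigma^{X'}\circ u\circ(\gamma_\sigma^X)^{-1}$ on a grade-$1$ morphism $u\colon X\to X'$ defines an endofunctor $F^\sigma$ of $\Ker\mathbb G$, and $F^1=id_{\Ker\mathbb G}$ by the normalization. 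Because composing $\gamma_\sigma$ then $\gamma_\tau$ is an isomorphism of grade $\sigma\tau$ from $X$ to $\tau\sigma X$, while $\gamma_{\sigma\tau}$ has the same source and target grade, the composite $\gamma_\tau^{\sigma X}\circ\gamma_\sigma^X$ and $\gamma_{\sigma\tau}^X$ differ by a grade-$1$ automorphism-up-to-the-target, which I would take as the component $\theta^{\sigma,\tau}_X\colon F^\sigma F^\tau(X)\to F^{\sigma\tau}(X)$; naturality in $X$ is forced since everything is built from the chosen $\gamma$'s, and conditions (i)--(ii) of the definition follow from the normalization $\gamma_1=id$.

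Next I would promote each $F^\sigma$ to a symmetric monoidal functor on $\Ker\mathbb G$. The tensor-structure isomorphism $\widetilde{F^\sigma}_{X,Y}\colon F^\sigma X\otimes F^\sigma Y\to F^\sigma(X\otimes Y)$, i.e. $\sigma X\otimes\sigma Y\to\sigma(X\otimes Y)$, is obtained from the grade-$1$ part of the comparison between $\gamma_\sigma^X\otimes\gamma_\sigma^Y$ and $\gamma_\sigma^{X\otimes Y}$ (both are isomorphisms of grade $\sigma$ with the appropriate source and target up to $\otimes$), and $F^\sigma_\ast\colon I\to F^\sigma I$ from the chosen identification $\sigma I\cong I$. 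The monoidal coherence axioms for $(F^\sigma,\widetilde{F^\sigma},F^\sigma_\ast)$, and compatibility with the braiding $\mathbf c$ (hence the ``symmetric'' in symmetric monoidal functor), are then consequences of the coherence axioms (\ref{2.1})--(\ref{2.4}) of $\mathbb G$ together with naturality of $\mathbf a,\mathbf l,\mathbf r,\mathbf c$ with respect to graded morphisms: each hexagon/pentagon for $F^\sigma$ unpacks, after substituting the definitions of $\widetilde{F^\sigma}$, into an instance of the corresponding axiom in $\mathbb G$ transported along the $\gamma_\sigma$'s. Finally, one checks that $\theta^{\sigma,\tau}\colon F^\sigma F^\tau\to F^{\sigma\tau}$ is a monoidal natural equivalence, i.e. satisfies (\ref{3.4}): this is again a diagram chase comparing the two ways of reconciling $\gamma_\tau\circ\gamma_\sigma$ with $\gamma_{\sigma\tau}$ through the tensor structures, and the pentagon-type coherence (iii) in the definition of a factor set reduces to the associativity bookkeeping $\gamma_\gamma\circ(\gamma_\tau\circ\gamma_\sigma)=(\gamma_\gamma\circ\gamma_\tau)\circ\gamma_\sigma$ for composition of the chosen translations, which holds strictly in $\mathbb G$.

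The main obstacle is bookkeeping rather than conceptual: the chosen isomorphisms $\gamma_\sigma^X$ are only defined up to a grade-$1$ automorphism, so all of $F^\sigma$, $\widetilde{F^\sigma}$, $\theta^{\sigma,\tau}$ depend on these choices, and one must verify that the coherence identities are \emph{insensitive} to them — equivalently, that a different set of choices yields an equivalent factor set. Keeping the normalizations ($\gamma_1=id$, the strict handling of $I$) consistent throughout, and carefully tracking grades so that every comparison isomorphism invoked genuinely has grade $1$ and therefore lives in $\Ker\mathbb G$, is where the care is needed; I would isolate these two normalization lemmas first and then let the axioms (\ref{2.1})--(\ref{2.4}) and (\ref{3.4}) do the rest. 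Since the statement only asserts existence of \emph{a} symmetric factor set (not a canonical one), it suffices to fix one system of choices once and for all, which removes the uniqueness burden from the proof.
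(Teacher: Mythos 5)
Your construction is essentially identical to the paper's proof: the paper likewise uses stability to choose isomorphisms $\Upsilon^{\sigma}_{X}:X\to F^{\sigma}X$ of grade $\sigma$ (normalized by $F^{1}X=X$, $\Upsilon^{1}_{X}=id_{X}$), defines $F^{\sigma}$ on grade-$1$ morphisms by conjugation, sets $\widetilde{F}^{\sigma}_{X,Y}$ from $\Upsilon^{\sigma}_{X}\otimes\Upsilon^{\sigma}_{Y}$ versus $\Upsilon^{\sigma}_{X\otimes Y}$, and takes $\theta^{\sigma,\tau}_{X}=\Upsilon^{\sigma}_{F^{\tau}X}\circ\Upsilon^{\tau}_{X}\circ(\Upsilon^{\sigma\tau}_{X})^{-1}$, exactly as you propose (modulo a harmless slip in the order of composition when you form $\theta$). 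The coherence verifications you outline are the ones the paper leaves implicit, so the proposal is correct and matches the paper's approach.
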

\begin{proof}
 For each  $\sigma\in\Gamma$,   we construct a symmetric monoidal functor  $F^{\sigma} = (F^{\sigma},
    \widetilde{F}^{\sigma}):
\Ker \mathbb G\rightarrow \Ker \mathbb G$ as follows.
  For any $X\in \Ker \mathbb G$, since the grading $gr$ is stable, there is an isomorphism  $\Upsilon ^{\sigma}_{X} : X \stackrel\sim\rightarrow
  F^{\sigma}X$, where $F^{\sigma}X\in \Ker \mathbb G$, and
   $gr(\Upsilon ^{\sigma}_{X})=\sigma$. In particular, if $\sigma = 1$
    we set $F^{1}X = X$ and $\Upsilon^{1}_{X} = id_{X}$.
 For any morphism $f:X\rightarrow Y$ of grade 1 in $\Ker \mathbb G$, a morphism
   $F^{\sigma}(f)$ in $\Ker \mathbb G$ is  determined by
 $$F^{\sigma}(f)=\Upsilon^{\sigma}_{Y}\circ f\circ(\Upsilon^{\sigma}_{X})^{-1}.$$
 Natural isomorphisms $\widetilde{F}^{\sigma}_{X,Y} : {F^{\sigma}X}
\otimes {F^{\sigma}Y} \stackrel\sim\longrightarrow
F^{\sigma}(X\otimes Y)$ are  determined by 
$$\widetilde{F}^{\sigma}_{X,Y}=(\Upsilon^{\sigma}_{X}
\otimes \Upsilon^{\sigma}_{Y})\circ(\Upsilon^{\sigma}_{X\otimes
Y})^{-1}.$$
Moreover, for any pair $\sigma,\tau\in\Gamma$, there is
an isomorphism between monoidal functors $ \theta^{\sigma,\tau}
:F^{\sigma}F^{\tau} \stackrel\sim\longrightarrow F^{\sigma\tau}$,
where $\theta^{1,\sigma} =id_{F^{\sigma}} =\theta^{\sigma,1}$, which
is determined by
$$\theta^{\sigma,\tau}_{X}=\Upsilon^{\sigma}_{F^{\tau}X}\circ\Upsilon^{\tau}_{X}\circ(\Upsilon^{\sigma\tau}_{X})^{-1},$$
for all $X \in \Ob\mathbb G$.

The pair $(F,\theta)$ determined above is a symmetric factor set.
\end{proof}
\section {Braided crossed modules}
We first recall that a {\it crossed module} \cite{White49}   $(B,D,d,\vartheta)$
consists of groups $B,D$, group homomorphisms $d:B\ri D,\;\vartheta:D\ri$
Aut$B$ satisfying\\
\indent $C_1.\ \vartheta d=\mu$,\\
 \indent $C_2.\ d(\vartheta_x(b))=\mu_x(d(b)),\;x\in D,
b\in B,$\\
 where $\mu_x$ is an inner automorphism given
by conjugation with $x$.

In this paper,  the crossed module  $(B,D,d,\vartheta)$ is sometimes
denoted by $B\stackrel{d}{\ri}D$, or by $d:B\ri D$. For convenience, we write the addition for the operation in $B$ and  the multiplication for that in $D$.

The notion of braided crossed module over a groupoid was originally
introduced by Brown and Gilbert in \cite{Gilbert}. Later, the notion
of braided crossed module over groups appeared in the work of Joyal
and Street \cite{J-S} (Remark 3.1).

 \begin{definition} [\cite{J-S}] \emph{A} {\it braided crossed module} \emph{$\mathcal M$ is a crossed module
 $(B,D,d,\vartheta)$ together with a map $\eta: D\times D\ri B$ satisfying the following
 conditions:}

  $C_3.\ \eta(x,yz)=\eta(x,y)+\vartheta_y \eta(x,z),$

  $C_4.\ \eta(xy,z)=\vartheta_x\eta(y,z)+\eta(x,z),$

  $C_5.\ d\eta(x,y)=xyx^{-1}y^{-1},$

 $C_6.\ \eta(d(b),x)+\vartheta_xb=b,$

  $C_7.\ \eta(x,d(b))+b=\vartheta_xb,$\\
\emph{where   $b\in B$, $x,y,z\in D$.}
 \end{definition}

A braided crossed module  is called a  {\it symmetric} crossed module  (see Aldrovandi and Noohi \cite{ANoohi})
  if  $\eta(x,y)+\eta(y,x)=0$ for all $x,y\in
 D$. In this case, the conditions
  $C_3$ and $C_4$ coincide, the conditions $C_6$ and $C_7$ coincide.

The following properties follow from the definition of a braided crossed
module.
\begin{proposition}
Let $\mathcal M$ be a braided crossed module.

$\mathrm{i)}$ $\eta(x,1) = \eta(1,y) = 0$.

$\mathrm{ii)}$ $\Ker d$ is a subgroup of $Z(B)$.

$\mathrm{iii)}$ $\Coker d$ is an abelian group.

$\mathrm{iv)}$ The homomorphism $\vartheta$ induces the identity on $\Ker d$, and hence the action of $\Coker d$ on $\Ker d$, given by

\[sa=\vartheta_x(a),\ \ a\in \Ker d,  \ x\in s\in \Coker d,\]
is trivial.
\end{proposition}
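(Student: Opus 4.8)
The plan is to verify each of the four items directly from the defining conditions $C_1$--$C_7$, treating them in the order given since later items build on earlier ones.

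For (i), I would put $y=z=1$ in $C_3$, which gives $\eta(x,1)=\eta(x,1)+\vartheta_1\eta(x,1)$; since $\vartheta_1=\mathrm{id}_B$ (as $\vartheta$ is a homomorphism into $\Aut B$), this forces $\eta(x,1)=0$. Symmetrically, setting $x=y=1$ in $C_4$ gives $\eta(1,z)=\vartheta_1\eta(1,z)+\eta(1,z)$, hence $\eta(1,z)=0$. For (ii), take $a\in\Ker d$. I would use $C_7$ with $b=a$: since $d(a)=1$, the left side becomes $\eta(x,1)+a$, which by (i) equals $a$, while the right side is $\vartheta_x a$; thus $\vartheta_x a = a$ for all $x\in D$. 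Then for arbitrary $b\in B$, apply $C_6$ with this $a$ in the role of $b$: $\eta(d(b),x)+\vartheta_x b = b$ — instead I should feed the element $a$ into a conjugation identity. Actually the cleaner route: $C_2$ with $b=a$ gives $d(\vartheta_x a)=\mu_x(d(a))=\mu_x(1)=1$, consistent but not yet centrality. To get $a\in Z(B)$, I use $C_5$: $d\eta(x,y)=xyx^{-1}y^{-1}$; taking $x=d(b)$ for $b\in B$ and $y=d(a')$ for $a'\in B$ is circular. The real argument for centrality: for $b\in B$, consider $C_6$ applied with $x=d(b)$: $\eta(d(a),d(b))+\vartheta_{d(b)}a=a$. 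Since $d(a)=1$, by (i) the first term vanishes, so $\vartheta_{d(b)}a=a$, i.e. $\mu_b(a)=a$ by $C_1$, meaning $b+a-b=a$, so $a+b=b+a$. Hence $\Ker d\subseteq Z(B)$, and it is clearly a subgroup.

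For (iii), I would show the commutator subgroup of $D$ lies in $\Image d$: by $C_5$, $xyx^{-1}y^{-1}=d\eta(x,y)\in\Image d$ for all $x,y\in D$, so $\Image d\supseteq [D,D]$, whence $\Coker d=D/\Image d$ is abelian. For (iv), I already showed in the course of (ii) that $\vartheta_x a=a$ for $a\in\Ker d$ — but I should present it cleanly here: $C_7$ with $b=a\in\Ker d$ reads $\eta(x,d(a))+a=\vartheta_x a$, and $d(a)=1$ together with (i) gives $\eta(x,1)=0$, so $\vartheta_x a=a$. This says $\vartheta$ restricts to the trivial action on $\Ker d$, so it descends: if $x,x'\in D$ lie in the same coset $s\in\Coker d$, then $x'=x\,d(b)$ for some $b$, and $\vartheta_{x'}a=\vartheta_x\vartheta_{d(b)}a=\vartheta_x(\mu_b a)=\vartheta_x(a)$ using centrality of $a$ from (ii); hence $sa:=\vartheta_x a$ is well defined, and it equals $a$, so the induced $\Coker d$-action on $\Ker d$ is trivial.

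I do not anticipate a genuine obstacle here; the only mild subtlety is making sure $\vartheta_1=\mathrm{id}$ and $\mu_1=\mathrm{id}$ are invoked (immediate from $\vartheta$ being a homomorphism and $C_1$), and keeping the additive/multiplicative notational convention straight — additive in $B$, multiplicative in $D$ — when translating $\mu_x(b)=xbx^{-1}$ into $\vartheta_x b = x+b-x$ style expressions under $C_1$. The logical dependencies are (ii) $\Rightarrow$ part of (iv), and (i) is used in both (ii) and (iv), so the stated order is the right one to follow.
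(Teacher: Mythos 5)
Your verification is correct; the paper states this proposition without proof (``follow from the definition''), and your direct derivation from $C_1$--$C_7$ --- (i) from $C_3$, $C_4$ with unit arguments and $\vartheta_1=\mathrm{id}$; centrality from $C_6$ with $x=d(b)$ plus $C_1$; (iii) from $C_5$ (together with normality of $\Image d$, which follows from $C_2$); (iv) from $C_7$ --- is exactly the intended routine check. As a small simplification, your first attempt in (ii) already suffices: $C_7$ gives $\vartheta_x a=a$ for all $x\in D$ and all $a\in\Ker d$, and specializing to $x=d(b)$ with $C_1$ yields $\mu_b(a)=a$, i.e.\ centrality, without needing the separate $C_6$ argument.
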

The abelian groups $\Ker d$ and $\Coker d$  are also denoted by $\pi_1 \mathcal M$ and $\pi_0 \mathcal M$, respectively.
\begin{example} \label{vd} Let  $N$ be a normal subgroup of a group  $G$ so that the quotient group  $G/N$ is abelian, in other words, let $N$ be a normal subgroup in $G$ which contains the derived group (or the commutator subgroup) of $G$. Then, $(N,G,i,\mu,[,])$ is a braided crossed module, where $i:N\ri G$ is an inclusion, $\mu:G\ri \Aut N$ is defined by conjugation and  $\eta: G\times
G\ri N,$ $\eta(x,y)= [x,y] (= xyx^{-1}y^{-1})$.
\end{example}

According to Joyal and Street \cite{J-S}, each braided crossed module is determined by a braided strict categorical group. We now classify the category of  braided crossed modules.
\begin{definition} \emph{ A}  {\it homomorphism} \emph{of braided crossed modules $(B,D,d,\vartheta, \eta )$ and $(B',D',d',\vartheta', \eta')$ consists of group homomorphisms $f_1:B\ri B'$,
$f_0:D\ri D'$ such that:}

$H_1.\ f_0d=d'f_1$,

$H_2.\ f_1(\vartheta_xb)=\vartheta'_{f_0(x)}f_1(b)$,

$H_3.\ f_1(\eta(x,y))=\eta'(f_0(x),f_0(y)),$\\
\emph{for all $x,y\in D,\, b\in B$.}
\end{definition}
 Therefore, a homomorphism of braided crossed modules is that of crossed modules which satisfies  $H_3$.

 We  determine the category \[{\bf BrCross}\]
whose objects are braided crossed modules and whose morphisms are triples
$(f_1, f_0, \varphi)$, where $(f_1, f_0): (B\stackrel{d}{\rightarrow} D)\rightarrow
  (B'\stackrel{d'}{\rightarrow} D')$ is a homomorphism of braided crossed modules
   and $\varphi\in Z^2_{ab}(\Coker d, \Ker d')$. The composition with the morphism
    $(f'_1, f'_0, \varphi'):(B'\stackrel{d'}{\rightarrow} D')\rightarrow
  (B''\stackrel{d''}{\rightarrow} D'')$ is given by
\begin{equation}\label{ht}
(f_1',f_0',\varphi')\circ(f_1,f_0,\varphi)=(f_1'f_1,f_0'f_0,
(f'_1)_\ast\varphi+(f_0)^\ast\varphi').
\end{equation}

\begin{definition}  \emph{A symmetric monoidal functor $(F,\widetilde{F}):\mathbb G\ri\mathbb G'$ is termed}
  {\it regular  } \emph{if}

 $B_1.\ F(x)\otimes F(y)=F(x\otimes y),$

$B_2.\ F(b)\otimes F(c)=F(b\otimes c),$

 $B_3.\ \widetilde{F}_{x,y}=\widetilde{F}_{y,x},$\\
\emph{for $x,y\in$ Ob$\mathbb G,\ b,c\in$ Mor$\mathbb G.$}
\end{definition}

Denote by
 \[{\bf BrGr^*}\]
 the category of braided strict categorical groups
and regular  symmetric monoidal functors
 and denote by $p:D\ri \Coker d$ a canonical projection, we obtain the following classification result.
  \begin{theorem}[Classification Theorem]\label{pl} There exists an equivalence
\[\begin{matrix}
 \Phi:{\bf BrCross}&\ri&{\bf BrGr^*},\\
B\ri D&\mapsto&\mathbb G_{B\ri D}\\
(f_1,f_0,\varphi)&\mapsto&(F,\widetilde{F})
\end{matrix}\]
 where $F(x)=f_0(x),F(b)=f_1(b), \widetilde{F}_{x,y}=\varphi(px,py)$, for $x, y\in D, b\in B$.
\end{theorem}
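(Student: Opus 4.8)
The plan is to establish the stated equivalence by constructing the functor $\Phi$ explicitly and then producing a (quasi-)inverse, verifying the usual conditions (well-definedness on objects and morphisms, functoriality, essential surjectivity, full faithfulness). First I would describe $\Phi$ on objects: given a braided crossed module $\mathcal M=(B,D,d,\vartheta,\eta)$, I build a braided strict categorical group $\mathbb G_{B\ri D}$ whose objects are the elements of $D$, whose morphisms $x\ri y$ are elements $b\in B$ with $d(b)x=y$, with composition given by addition in $B$, tensor product on objects given by multiplication in $D$ and on morphisms by the semidirect-type rule $b\otimes c=b+\vartheta_x(c)$ (for $b:x\ri y$, $c:x'\ri y'$), strict unit and associativity constraints, and braiding $\mathbf c_{x,y}=\eta(x,y):xy\ri yx$. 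Here the crossed-module axioms $C_1,C_2$ make $\otimes$ a functor and the constraints coherent, while $C_3$--$C_7$ are precisely what is needed for $\mathbf c$ to be a natural, coherent braiding (this is essentially the Joyal--Street correspondence recalled above). Since every object $x$ has inverse $x^{-1}$ with $x\otimes x^{-1}=1$ strictly, $\mathbb G_{B\ri D}$ is a braided strict categorical group.

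Next I would treat morphisms. Given $(f_1,f_0,\varphi):\mathcal M\ri\mathcal M'$, set $F(x)=f_0(x)$ on objects, $F(b)=f_1(b)$ on morphisms (well-defined by $H_1$), and take the monoidal structure isomorphism $\widetilde F_{x,y}:F(x)\otimes F(y)\ri F(x\otimes y)$ to be $\varphi(px,py)\in\Ker d'\subseteq B'$, viewed as an automorphism of $f_0(x)f_0(y)$ — this makes sense because $f_0(x)f_0(y)=f_0(xy)$ by $H_1$ on the level of objects, so both are the object $f_0(xy)$ and $\varphi(px,py)$ lies in $\Ker d'$. I would check: $\widetilde F$ is natural (it is central, so naturality against arbitrary morphisms is the cocycle-valued analogue of the usual computation); the hexagon/pentagon coherence conditions for $(F,\widetilde F)$ reduce, after cancelling the strict associativity, exactly to the $2$-cocycle condition on $\varphi$ in $Z^2_{ab}(\Coker d,\Ker d')$ together with $H_3$ (which forces compatibility of $\widetilde F$ with the braidings, giving condition $B_3$ — note $\varphi$ symmetric is \emph{not} required, rather $B_3$ says $\widetilde F_{x,y}=\widetilde F_{y,x}$, which holds because $\varphi(px,py)$ depends only on the images in the abelian group $\Coker d$ and... actually here I must be careful, see below); conditions $B_1,B_2$ hold on the nose since $F$ is induced by group homomorphisms. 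Functoriality of $\Phi$, i.e. $\Phi((f_1',f_0',\varphi')\circ(f_1,f_0,\varphi))=\Phi(f_1',f_0',\varphi')\circ\Phi(f_1,f_0,\varphi)$, should match the composition formula \eqref{ht} for {\bf BrCross} against the standard composite of monoidal functors, where the composite monoidal structure isomorphism is $(f_1')_\ast\varphi+(f_0)^\ast\varphi'$ — this is a direct check.

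Then I would prove essential surjectivity and full faithfulness. For essential surjectivity, given a braided strict categorical group $\mathbb G$, its set of objects $D$ is a group under $\otimes$, the morphisms with source $1$ form a group $B$ under composition with $d:B\ri D$ the target map, conjugation gives $\vartheta$, and the braiding on objects (which are automorphisms-free in the strict setting, so $\mathbf c_{x,y}$ is literally an element of $B$) gives $\eta$; the axioms $C_1$--$C_7$ come from functoriality, naturality and the hexagons, so $\mathbb G\cong\Phi(\mathcal M)$ for this $\mathcal M$. For full faithfulness: a morphism $(F,\widetilde F):\Phi(\mathcal M)\ri\Phi(\mathcal M')$ in ${\bf BrGr^*}$ has, by regularity conditions $B_1,B_2$, underlying maps $f_0:D\ri D'$, $f_1:B\ri B'$ that are group homomorphisms, and $H_1,H_2,H_3$ follow from $F$ being a functor compatible with $\otimes$ and with the braidings; the monoidal isomorphism $\widetilde F_{x,y}$ is then an element of $\Ker d'$ (its target and source agree by $H_1$), and naturality plus coherence make $(x,y)\mapsto \widetilde F_{x,y}$ factor through $\Coker d\times\Coker d$ as a $2$-cocycle $\varphi$, with $B_3$ forcing the symmetry needed for $\varphi\in Z^2_{ab}$ in the Eilenberg--MacLane sense. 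Injectivity on hom-sets is because $(f_1,f_0,\varphi)$ is recovered from $(F,\widetilde F)$; surjectivity on hom-sets is the construction just given running backwards.

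\textbf{The main obstacle.} The delicate point is the precise bookkeeping of where $\widetilde F_{x,y}$ lives and why it descends to an honest abelian $2$-cocycle on $\Coker d$ with values in $\Ker d'$ (including the role of $B_3$ versus symmetry of $\varphi$, and normalization). The coherence hexagons for a monoidal functor between braided strict categorical groups, once stripped of strict associativity, should collapse exactly to the defining identities of $Z^2_{ab}$ of Eilenberg--MacLane \cite{E-M}, but matching conventions (left vs.\ right actions, the sign/order in $b\otimes c=b+\vartheta_x(c)$, and the fact that the $\Coker d$-action on $\Ker d'$ is trivial by Proposition on $\pi_1,\pi_0$) requires care; I expect this verification, rather than the formal equivalence machinery, to be the technical heart of the argument, and it is what makes the target category ${\bf BrGr^*}$ (with its regularity conditions) precisely the right one.
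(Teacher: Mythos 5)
Your proposal is correct and follows essentially the same route as the paper: the paper proves Theorem \ref{pl} by declaring it the special case $\Gamma=1$ of Theorem \ref{dlpl}, whose proof (via Lemmas \ref{t1} and \ref{n1}) is exactly your outline --- deriving the $2$-cocycle condition and the symmetry of $\varphi$ from the coherence of $\widetilde{F}$ with the associativity and braiding constraints, and obtaining essential surjectivity from the associated crossed module construction. The one point you flag as delicate (the role of $B_3$ versus symmetry of $\varphi$) is resolved in the paper as you suspect: compatibility with the braiding together with $H_3$ and the centrality of $\Ker d'$ in $B'$ force $f(x,y)=f(y,x)$, so $\varphi$ is indeed a symmetric cocycle.
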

\begin{proof}
 The proof of this theorem is a particular case of Theorem \ref{dlpl} in the next section.
\end{proof}

\begin{remark} Denote by $Br{\bf Cross}$ the subcategory of {\bf
BrCross} whose morphisms are homomorphisms of braided crossed
modules ($\varphi=0$) and denote by $Br{\bf Gr^*}$ the subcategory
of $ {\bf BrGr^*}$ whose morphisms are strict monoidal functors
($\widetilde{F}=id$). Then, these two categories are equivalent via
 $\Phi$.
\end{remark}

\section{ Braided $\Gamma$-crossed modules}
The main objective of this section is to classify braided
$\Gamma$-crossed modules
 by means of strict braided graded categorical groups.
First, observe that if $B$ is a $\Gamma $-group, the group $\Aut B$ of all automorphisms of $B$ is also a $\Gamma $-group under the action
\[(\sigma f)(b)=\sigma(f(\sigma^{-1}b)),\;b\in B,\;f\in \Aut B,\, \sigma\in \Gamma.\]
Then, the map $\mu:B\ri \Aut B, b\mapsto \mu_b$ ($\mu_b$ is an inner automorphism of $B$ given by conjugation with  $b$) is a homomorphism of $\Gamma $-groups.

 \begin{definition} \emph{Let $B$ and $D$ be $\Gamma $-groups. A } {\it braided (symmetric) $\Gamma$-crossed module},
 \emph{ is a braided (symmetric) crossed module   $\mathcal M = (B,D,d, \vartheta,\eta)$
   in which $d:B\ri D,\;\vartheta:D\ri$ Aut$B$ are  $\Gamma$-group homomorphisms satisfying the following
   conditions:}

$\Gamma _1.$  $\sigma(\vartheta_x(b))=\vartheta_{\sigma x}(\sigma
b),$

$\Gamma_2$.   $\sigma\eta(x, y)=\eta(\sigma x, \sigma y),$\\
\emph{where $\sigma\in\Gamma,\ x,y \in D $ and $b\in B.$}
\end{definition}

Braided (symmetric) $\Gamma$-crossed modules are also called braided
(symmetric) equivariant crossed modules by Noohi \cite{N}.

The following properties are implied from the definition of a braided $\Gamma$-crossed module.
\begin{proposition} \label{md2}
Let $\mathcal M $ be a braided $\Gamma $-crossed module.

 $\mathrm{i)}$ $\Ker d$ is a $\Gamma $-submodule of $Z(B)$.

 $\mathrm{ii)}$ $\Coker d$ is a $\Gamma $-module under the action
\[\sigma s=[\sigma x],  \, x\in s\in \Coker d,\, \sigma \in \Gamma.\]
\end{proposition}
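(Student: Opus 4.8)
The plan is to bootstrap from the non-graded statements already recorded above for braided crossed modules (the proposition asserting $\Ker d\subseteq Z(B)$ and that $\Coker d$ is abelian): all that the $\Gamma$-refinement adds is the compatibility of these two subquotients with the $\Gamma$-action, and each such compatibility reduces to the single hypothesis that $d\colon B\ri D$ is a homomorphism of $\Gamma$-groups. In particular neither $\Gamma_1$ nor $\Gamma_2$ will be needed.

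For i), I would first note that $Z(B)$ is $\Gamma$-stable: for $b\in Z(B)$, $\sigma\in\Gamma$ and arbitrary $b'\in B$ we have $(\sigma b)b'=(\sigma b)\bigl(\sigma(\sigma^{-1}b')\bigr)=\sigma\bigl(b\,(\sigma^{-1}b')\bigr)=\sigma\bigl((\sigma^{-1}b')\,b\bigr)=b'(\sigma b)$, using only that $B$ is a $\Gamma$-group; since the centre is abelian, this makes $Z(B)$ a $\Gamma$-module. Next, because $d$ is $\Gamma$-equivariant, $a\in\Ker d$ gives $d(\sigma a)=\sigma\,d(a)=1$, so $\Ker d$ is a $\Gamma$-stable subgroup; combined with the inclusion $\Ker d\subseteq Z(B)$ from the non-graded case, this exhibits $\Ker d$ as a $\Gamma$-submodule of $Z(B)$.

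For ii), recall that the crossed-module axiom $C_2$ gives $d(\vartheta_x b)=x\,d(b)\,x^{-1}$, so $\Image d$ is normal in $D$ and $\Coker d=D/\Image d$ is a group, abelian by the non-graded proposition. To see that $\sigma s=[\sigma x]$ is well defined I would take two representatives $x$ and $x'=x\,d(b)$ of one class and compute $\sigma x'=(\sigma x)\,\sigma(d(b))=(\sigma x)\,d(\sigma b)$, again by $\Gamma$-equivariance of $d$, whence $[\sigma x']=[\sigma x]$. It is then formal that $1\cdot s=s$ and $(\sigma\tau)s=[(\sigma\tau)x]=[\sigma(\tau x)]=\sigma(\tau s)$, so $\Gamma$ acts on the underlying set of $\Coker d$; and each $\sigma$ respects the operation, since for $s=[x]$, $s'=[x']$ one has $\sigma(s+s')=[\sigma(xx')]=[(\sigma x)(\sigma x')]=\sigma s+\sigma s'$. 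Hence $\Coker d$ is a $\Gamma$-module.

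The argument has no genuine obstacle; the single point that must not be glossed over is the well-definedness of the action in ii), and that is precisely where the $\Gamma$-group hypothesis on $d$ — rather than mere $\Gamma$-equivariance of underlying set maps — is used, since it is what forces $\Image d$ to be $\Gamma$-stable. Everything else is a routine transport of the non-graded proposition through the $\Gamma$-action.
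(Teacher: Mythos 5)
Your verification is correct, and it is exactly the routine check the paper has in mind: the paper states Proposition \ref{md2} as an immediate consequence of the definition (no proof is given), and your argument supplies precisely those details — $\Ker d\subseteq Z(B)$ and the abelianness of $\Coker d$ from the non-graded case, plus $\Gamma$-stability of $\Ker d$, $Z(B)$ and $\Image d$ from the hypothesis that $d$ is a $\Gamma$-group homomorphism. Your observation that $\Gamma_1$ and $\Gamma_2$ are not needed here is also accurate.
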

\begin{example}  In Example \ref{vd}, if $G$ and $N$ are  $\Gamma$-groups, then $(N,G,i,\mu,[,])$ is a braided $\Gamma$-crossed module.
\end{example}
\begin{example}
Let  $d:B\rightarrow D$ be a morphism of $\Gamma$-module and let $D$
act trivially on  $B$. Let $\eta:D\times D\rightarrow Ker d$ be a
biadditive  function satisfies  $\Gamma_2$ and
$$\eta|_{Im d\times D}=0=\eta|_{D \times Im d}.$$
Then, $(B,D,d,0,\eta)$ is a braided $\Gamma$-crossed module.
\end{example}

 We now show
that braided $\Gamma$-crossed modules are determined by braided
strict  $\Gamma $-graded categorical groups. First, we say that
 a symmetric factor set $(F,\theta)$ on $\Gamma $ with coefficients in a   braided categorical group $\mathbb G$
 is \emph{regular} if $F^\sigma $ is a regular symmetric monoidal functor   and $\theta^{\sigma, \tau} =id$, for all $\sigma,\tau \in \Gamma $.
 \begin{definition} \emph{ A braided $\Gamma $-graded categorical group  $(\mathbb G,gr)$ is called } {\it strict}
 \emph{if}

\emph{i) $\Ker \mathbb G$ is a braided  strict categorical group,}

\emph{ii) $\mathbb G$ induces a regular symmetric factor set
$(F,\theta)$ on $\Gamma $ with coefficients in  $\Ker \mathbb G$.}
\end{definition}
Equivalently, a braided $\Gamma $-graded  categorical group
$(\mathbb G,gr)$ is \emph{strict} if it is a $\Gamma $-graded
extension of a braided strict  categorical group by a regular
symmetric factor set.

 $\bullet$ Constructing the braided  strict $\Gamma $-graded categorical group
$\mathbb G= \mathbb G_{\mathcal M}$ associated to a braided $\Gamma$-crossed module  $\mathcal M=(B,D,d, \vartheta,\eta)$.

 Objects of $\mathbb G$ are the elements of the group $D$, a $\sigma$-morphism
 $x\ri y$ is  a pair $(b,\sigma)$, where $b\in B,\sigma\in\Gamma$
such that $\sigma x=d(b)y$. The composition of two
morphisms is given by
\begin{equation}\label{lh1}(x\stackrel{(b,\sigma)}{\ri}y\stackrel{(c,\tau)}{\ri}z)=(x\xrightarrow{(\tau b+c, \tau\sigma)}z).
\end{equation}
Since $B$ is  a $\Gamma$-group, the composition is associative and unitary.

For each morphism $(b,\sigma)$ in $\mathbb G$, we have
\[(b,\sigma)^{-1}=(-\sigma^{-1}b,\sigma^{-1}),\]
and hence $\mathbb G$ is a groupoid.

The tensor operation on objects is given by the addition in
the group $D$ and, for two morphisms $(x\stackrel{(b,\sigma)}{\ri}y),
(x'\stackrel{(c,\sigma)}{\ri}y')$ in $\mathbb G$, we define
\begin{equation}\label{lh2}
(x\stackrel{(b,\sigma)}{\ri}y)\otimes(x'\xrightarrow{(c,\sigma)}y')=(xx'\xrightarrow{(b+\vartheta_yc,\sigma)}yy').
\end{equation}
The functoriality of the tensor operation is implied from the compatibility of the action
 $\vartheta$ with  the $\Gamma$-action and from the conditions in the definition of a
braided  $\Gamma$-crossed module.

Associativity and unit constraints of the tensor operation are strict. The braiding constraint  $\bf c$ is defined by \[{\bf c}_{x,y}=(\eta(x,y),1):xy\ri yx.\]
By the relation $C_5$, ${\bf c}_{x,y}$ is actually a morphism in $\mathbb G$.
Due to the conditions $C_3, C_4$, the braiding constraint ${\bf c}$ is compatible  with the associativity constraint ${\bf a}$.  The naturality of ${\bf c}$ follows from the conditions $\Gamma _2, C_1,  C_3, C_4, C_6, C_7$.

The $\Gamma $-grading  $gr:  \mathbb G \rightarrow \Gamma$
is given by
\begin{align*} (b,\sigma)\mapsto \sigma. \end{align*}

The unit graded functor $I: \Gamma \rightarrow \mathbb G$
is defined by
\begin{align*} I(*\xrightarrow{\sigma } *)=(1\xrightarrow{(0,\sigma) } 1). \end{align*}

Since Ob$\mathbb G=D$ is a group and $x\otimes y=xy$, every object of $\mathbb G$ is invertible, and hence
 $\Ker\mathbb G$ is a braided strict categorical group.

We now show that $\mathbb G$ induces a regular symmetric factor set $(F,\theta)$ on $\Gamma$ with coefficients in $\Ker \mathbb G$.
For any $x\in D,\sigma\in\Gamma$, we set $F^\sigma(x)=\sigma x$,
  $\Upsilon ^\sigma_x =(x\stackrel {(0,\sigma)}{\rightarrow}\sigma x)$. Then, according to the proof of Lemma \ref{bd01}, we have $F^\sigma(b,1)=(\sigma b,1)$ and  $\theta^{\sigma,\tau}=id$. From the braided $\Gamma $-crossed module structure of $\mathcal M$, it follows that $F^\sigma $ is a regular symmetric monoidal functor on $\Ker \mathbb G$.


$\bullet$ Constructing the braided $\Gamma$-crossed module
 {\it associated} to a braided strict $\Gamma $-graded  categorical group $\mathbb G$.

Set
\begin{equation} D= \Ob\mathbb G,\;\;B=\{x\xrightarrow{b}1\ |\ x\in D,\ gr(b)=1  \}. \notag \end{equation}
 The operations in $D$ and $B$ are given by \[xy=x\otimes y,\;\ b+c=b\otimes c, \]
respectively. Then
 $D$ becomes a group in which the unity is 1 and the inverse of  $x$ is $x^{-1}$
($x\otimes x^{-1}=1$), $B$ is group in which the zero element is the
morphism $(1\xrightarrow{id_1} 1 )$ and the inverse of
$(x\xrightarrow{b} 1)$ is the morphism
$(x^{-1}\xrightarrow{\overline{b}}  1 )(b\otimes
\overline{b}=id_1)$. Since $\mathbb G$ has a regular symmetric
factor set $(F,\theta)$,   $D$ and $B$ are  $\Gamma $-groups under
the actions
 \begin{align*}
\sigma x&= F^\sigma (x),\; x\in D, \sigma \in \Gamma, \\
\sigma b&=F^\sigma (b),\ b\in B,
\end{align*}
respectively. The correspondences  $d:B\rightarrow D$ and $\vartheta: D\rightarrow
\Aut B$ are, respectively, given by
\[d(x\xrightarrow{b}  1)=x,\]
\[\vartheta _y(x\xrightarrow{b}1)= (yxy^{-1}\xrightarrow{id_y + b + id_{y^{-1}}}  1).\]
 Since $B$ and $D$ are  $\Gamma$-groups,   $d$ and $\vartheta$ are $\Gamma$-group homomorphisms.

The map $\eta:D\times D\ri B$ is defined by
\[\eta(x,y)={\bf c}_{x,y}\otimes id_{x^{-1}}\otimes id_{y^{-1}}: xyx^{-1}y^{-1}\rightarrow 1.\]


Now we will classify the category of braided $\Gamma$-crossed
modules.
\begin{definition}  \emph{ A}  {\it homomorphism}
\emph{$\mathcal M \rightarrow  \mathcal M'$ of  braided
$\Gamma$-crossed modules is a homomorphism $(f_1,f_0)$  of braided
crossed modules, where $f_1,f_0$ are $\Gamma$-group homomorphisms.}



\end{definition}
\noindent \emph{Remark on notations}. Each morphism
  $x\xrightarrow{(b,\sigma)}y$ in
$\mathbb G_\mathcal M$ is written in the form
\[x\xrightarrow{(0,\sigma)}\sigma
 x\xrightarrow{(b,1)}y,\]
 and then each graded symmetric monoidal functor   $(F,\widetilde{F}):\mathbb G_{\mathcal M}\ri \mathbb G_{\mathcal M'}$ defines a function $f:D^2\cup (D\times \Gamma) \ri B'$ by
 \begin{equation}\label{01}
 (f(x,y),1)=\widetilde{F}_{x,y},\quad (f(x,\sigma),\sigma)=F(x\stackrel{(0,\sigma)}{\rightarrow}\sigma x).
 \end{equation}
\begin{lemma}\label{t1}
Let $(f_1,f_0): \mathcal M\rightarrow \mathcal M'$ be a homomorphism of braided $\Gamma $-crossed modules. Then there is a graded  symmetric monoidal functor $(F,\widetilde{F}): \mathbb G_{\mathcal M}\rightarrow \mathbb G_{\mathcal M'}$ such that $F(x)=f_0(x)$,
  $F(b,1)=(f_1(b),1)$, if and only if $f=p^{\ast}\varphi$,  where $\varphi \in Z^2_{\Gamma,s}(\Coker d,$ $ \Ker
d')$, and $p:D\rightarrow  \Coker d$ is a canonical projection.
 \end{lemma}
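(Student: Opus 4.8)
The plan is to prove both implications by translating the coherence conditions defining a graded symmetric monoidal functor $(F,\widetilde F)$ into equations involving the function $f: D^2\cup(D\times\Gamma)\to B'$ introduced in (\ref{01}), and then recognizing those equations as (a) the cocycle condition for $\varphi\in Z^2_{\Gamma,s}(\Coker d,\Ker d')$ after descent along $p$, and (b) the constraint that forces $f$ to factor through $p$ in both variables.

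First I would set up the descent. Assume $(F,\widetilde F)$ is a graded symmetric monoidal functor with $F(x)=f_0(x)$ and $F(b,1)=(f_1(b),1)$. Using the ``Remark on notations'' decomposition $x\xrightarrow{(b,\sigma)}y = x\xrightarrow{(0,\sigma)}\sigma x\xrightarrow{(b',1)}y$ (with $d(b')y=\sigma x$), functoriality and the grading conditions pin down $F$ on all morphisms in terms of $f_1$, $f_0$ and $f$. The naturality of $\widetilde F$ together with the explicit tensor product (\ref{lh2}) in $\mathbb G_{\mathcal M}$ forces, for any $b$ with $x'=d(b)x$, a relation between $f(x',y)$ and $f(x,y)$; combined with the analogous relation in the second variable one concludes that $f(x,y)$ depends only on the classes $px,py\in\Coker d$, hence $f=p^{\ast}\varphi$ for a well-defined $\varphi:(\Coker d)^2\to B'$. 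Because $\widetilde F_{x,y}$ has grade $1$ and lands in morphisms $\ast\to 1$, its ``value'' $f(x,y)$ actually lies in $\Ker d'$ (one reads off $d'(f(x,y))=1$ from the compatibility of $\widetilde F$ with sources and targets), so $\varphi$ takes values in $\Ker d'$. The hexagon/pentagon coherence condition for $\widetilde F$ (the first displayed axiom for graded symmetric monoidal functors), written out against the strict associativity of $\mathbb G_{\mathcal M},\mathbb G_{\mathcal M'}$ and the formula $\theta^{\sigma,\tau}=id$ coming from the regular factor set, becomes precisely the equivariant $2$-cocycle identity; the symmetry axiom $\widetilde F_{Y,X}{\bf c}_{FX,FY}=F({\bf c}_{X,Y})\widetilde F_{X,Y}$ together with the braiding ${\bf c}_{x,y}=(\eta(x,y),1)$ and condition $\Gamma_2$, plus the triangle condition for $F_\ast$ and the normalization, gives the symmetry and normalization parts. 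Hence $\varphi\in Z^2_{\Gamma,s}(\Coker d,\Ker d')$.

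For the converse, given $\varphi\in Z^2_{\Gamma,s}(\Coker d,\Ker d')$ I would simply define $F$ on objects by $f_0$, on grade-$1$ morphisms by $f_1$, on the distinguished morphisms $x\xrightarrow{(0,\sigma)}\sigma x$ as $f_0(x)\xrightarrow{(0,\sigma)}\sigma f_0(x)$ (so $f(x,\sigma)=0$), and extend to all morphisms by the decomposition above; set $\widetilde F_{x,y}=(p^{\ast}\varphi(x,y),1)=(\varphi(px,py),1)$ and $F_\ast=id$. One then checks: $F$ is a well-defined $\Gamma$-graded functor (the relation $d(b')y=\sigma x$ is preserved because $(f_1,f_0)$ is a homomorphism of braided $\Gamma$-crossed modules, in particular $f_0 d=d'f_1$ and $f_1(\vartheta_x b)=\vartheta'_{f_0 x}f_1 b$); $\widetilde F$ is natural (this is where the biadditivity/normalization of $\varphi$ together with $C_6,C_7$ enters, reversing the computation from the first half); and the three coherence axioms hold, the associativity-compatibility axiom being exactly the $2$-cocycle condition on $\varphi$ read backwards, and the symmetry axiom being $H_3$ ($f_1\eta=\eta'(f_0\times f_0)$) combined with the symmetry of $\varphi$.

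The main obstacle I anticipate is the bookkeeping in the descent step: verifying cleanly that naturality of $\widetilde F$ forces $f(x,y)$ to be independent of the representatives $x,y$ of their classes in $\Coker d$, and that the resulting $\varphi$ is genuinely $\Ker d'$-valued and satisfies the \emph{equivariant} (not just ordinary) cocycle identity $Z^2_{\Gamma,s}$ of \cite{CK} — this requires carefully matching the $\sigma$-action on $\Coker d$ and $\Ker d'$ (Proposition \ref{md2}) against the factor-set data $F^\sigma$, and using that $\theta^{\sigma,\tau}=id$ so that no extra correction terms appear. Everything else is a routine, if lengthy, unwinding of the coherence diagrams against the strict structure of the categorical groups $\mathbb G_{\mathcal M}$ and $\mathbb G_{\mathcal M'}$.
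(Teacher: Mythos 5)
Your forward direction is essentially the paper's argument: one translates functoriality of $F$, naturality of $\widetilde F$, and the three monoidal/symmetry coherence axioms into equations on the function $f$ of \eqref{01}, observes that $f$ takes values in $\Ker d'\subset Z(B')$ and descends along $p$ in each slot, and recognizes the resulting system as the defining identities of $Z^2_{\Gamma,s}(\Coker d,\Ker d')$. One small misattribution: in the paper the genuinely ``equivariant'' equations, namely \eqref{1s} ($\tau f(x,\sigma)+f(\sigma x,\tau)=f(x,\tau\sigma)$) and \eqref{2s}, come from $F$ preserving composition of the morphisms $(0,\sigma)$ and from naturality of $\widetilde F$ against those morphisms --- not from the pentagon/hexagon coherence axioms, which only yield the ordinary cocycle identity \eqref{3s} and the symmetry \eqref{4s}. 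This does not affect correctness, since you do list functoriality and naturality among the conditions you unwind.

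The genuine gap is in your converse. You propose to send the distinguished morphism $x\xrightarrow{(0,\sigma)}\sigma x$ to $f_0(x)\xrightarrow{(0,\sigma)}\sigma f_0(x)$, i.e.\ to force $f(x,\sigma)=0$, while putting $\widetilde F_{x,y}=(\varphi(px,py),1)$. A cochain in $Z^2_{\Gamma,s}(\Coker d,\Ker d')$ in the sense of \cite{CK} is a function on $\Coker^2 d\cup(\Coker d\times\Gamma)$, and the equality $f=p^\ast\varphi$ in the statement concerns \emph{both} components; the correct definition (used in Theorem \ref{dlpl}) is $F(x\xrightarrow{(0,\sigma)}\sigma x)=(\varphi(px,\sigma),\sigma)$. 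Your choice is not merely a loss of generality: with $f(x,\sigma)=0$, the naturality square of $\widetilde F$ against the morphisms $(0,\sigma)$ reduces \eqref{2s} to $\sigma\varphi(r,s)=\varphi(\sigma r,\sigma s)$, which is \emph{not} a consequence of the symmetric cocycle condition for a general $\varphi$ (the mixed terms $\varphi(-,\sigma)$ are exactly what compensate for the failure of strict equivariance). So for a generic $\varphi\in Z^2_{\Gamma,s}(\Coker d,\Ker d')$ your $\widetilde F$ fails to be natural and the constructed $(F,\widetilde F)$ is not a graded monoidal functor. The fix is simply to keep the full $\Gamma$-component: define $F$ on $x\xrightarrow{(0,\sigma)}\sigma x$ by $(\varphi(px,\sigma),\sigma)$ and then run your verification; the identities \eqref{1s}--\eqref{4s} are exactly the cocycle and symmetry conditions read backwards, as you intended.
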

\begin{proof}
Since  $f_0$  is a homomorphism and $Fx=f_0(x)$, $\widetilde{F}_{x,y}: FxFy\ri
F(xy)$ is a morphism of grade 1 in $\mathbb G'$ if and only if
$df(x,y)=1'$, or $f(x,y)\in\Ker d'\subset Z(B')$.

Also, since $f_0$ is a $\Gamma$-homomorphism,  $Fx\xrightarrow{(f(x,\sigma),\sigma)}F\sigma x$ is a morphism of grade $\sigma$ in $\mathbb G'$ if and only if $df(x,\sigma)=1'$, or $f(x,\sigma)\in\Ker d'\subset
Z(B')$.

$\bullet$ The condition so that  $F$ preserves the composition of two morphisms.

 Since $f_1$ is a group homomorphism, $F$ preserves the composition of two morphisms of grade 1. $F$ preserves the composition of two morphisms in terms of $(0,\sigma)$,
 \[(x\xrightarrow{(0,\sigma) }y\xrightarrow{(0,\tau) }z),\]
if and only if
\begin{equation}\label{1s}\tau f(x,\sigma)+f(\sigma x,\tau)=f(x,\tau\sigma).
\end{equation}

$\bullet$ The condition so that $\widetilde{F}_{x,y}$ is natural.

- For morphisms of grade 1, we consider the diagram
\begin{equation*}\label{bdtn1}
\begin{diagram}
\node{F(x)F(y)}\arrow{e,t}{\widetilde{F}_{x,y}}\arrow{s,l}{F(b,1)\otimes
F(c,1)}\node{F(xy)}\arrow{s,r}{F[(b,1)\otimes (c,1)]}\\
\node{F(x')F(y')}\arrow{e,b}{\widetilde{F}_{x',y'}}\node{F(x'y')}
\end{diagram}
\end{equation*}
Since  $f_1,\ f_0$ are homomorphisms satisfying the condition $H_2$,
\[F(b,1)\otimes
F(c,1)= F[(b,1)\otimes
(c,1)].\]
Then since
 $f(x,y),\ f(x',y')\in Z(B')$, the above diagram commutes if and only if
\[f(x,y)=f(x',y'),\]
for $x=d(b)x'$, $y=d(c)y'.$ Thus, $\widetilde{F}$ defines a function
$\varphi:\Coker^2 d\ri\Ker d'$,
\[\varphi(r,s)=f(x,y),\  r=px, s=py.\]

- For morphisms in terms of $(0,\sigma)$, we consider a diagram
\begin{equation*}\label{bdtn2}
\begin{diagram}
\node{F(x)F(y)}\arrow{e,t}{\widetilde{F}_{x,y}}\arrow{s,l}{F(0,\sigma)\otimes
F(0,\sigma)}\node{F(xy)}\arrow{s,r}{F [ (0,\sigma)\otimes (0,\sigma )]}\\
\node{F(\sigma x)F(\sigma y)}\arrow{e,b}{\widetilde{F}_{\sigma
x,\sigma y}}\node{F(\sigma x)(\sigma y)=F\sigma(xy).}
\end{diagram}
\end{equation*}
According to Proposition \ref{md2}, the above diagram commutes if and only if
\begin{equation}\label{2s} \sigma f(x,y) +f(xy,\sigma)=f(x,\sigma)+ f(y,\tau) +f(\sigma x,\sigma
y).
\end{equation}
$\bullet$ Since the following square commutes
\begin{equation*}\label{bdtn3}
\begin{diagram}
\node{Fx}\arrow{e,t}{(f(x,\sigma),\sigma)}\arrow{s,l}{F(b,1) }\node{F\sigma x}\arrow{s,r}{F(\sigma b,1)}\\
\node{Fy}\arrow{e,b}{(f(y,\sigma),\sigma)}\node{F\sigma y}
\end{diagram}
\end{equation*}
and   $f_1$ is a $\Gamma$-group homomorphism, we have
$f(x,\sigma)=f(y,\sigma),$ for $x=d(b)y$. This determines a function
$\varphi:\Coker d\times\Gamma\ri \Ker d'$,
\[\varphi(r,\sigma)=f(x,\sigma),\ r=px.\]
Therefore, we obtain a function
\[\varphi:\Coker^2d\cup\Coker d\times\Gamma\ri\Ker d'.\]
The function $\varphi$ is normalized in the sense that
\[\varphi(1,r)=\varphi(s,1)=0=\varphi(s,1_\Gamma).\]
The first two equalities follow from the property $F(1)=1'$ and the compatibility of $(F,\widetilde{F})$
with unit constraints. The final equality holds owing to $f(x,1_\Gamma )=0$ (following from the relation \eqref{1s}).

By Proposition \ref{md2}, the compatibility of $(F,\widetilde{F})$
with associativity constraints is equivalent to
\begin{equation}\label{3s} f(y,z) +f(x,yz)=f(x,y)+f(xy,z).\end{equation}

The compatibility of $(F,\widetilde{F})$ with the braiding constraints implies
\begin{align*}   f(x,y)+f_1(\eta(x,y))= \eta'(f_0(x),f_0(y))+ f(y,x).
 \end{align*}
By the fact that $f(x,y) \in \Ker d'\subset Z(B')$ and by  the condition $H_3$, one has
\begin{equation} \label{4s} f(x,y)=f(y,x).\end{equation}
From the relations  (\ref{1s})--(\ref{4s}), it follows that
 $\varphi\in Z^2_{\Gamma, s}(\Coker d,\Ker d')$.
\end{proof}
 We define the category
 \[{\bf _\Gamma BrCross}\]
 whose objects are braided $\Gamma$-crossed modules and whose morphisms are triples \linebreak $(f_1, f_0, \varphi)$, where $(f_1, f_0): (B\stackrel{d}{\rightarrow} D)\rightarrow
  (B'\stackrel{d'}{\rightarrow} D')$ is a homomorphism of  braided $\Gamma$-crossed modules,
   and $\varphi \in Z^2_{\Gamma,s}(\Coker d, \Ker d')$. The
   composition is given by \eqref{ht}.

Note that a braided strict $\Gamma $-graded  categorical group
$\mathbb G$ induces $\Gamma$-actions on the group $D$ of objects and
on  the group $B$ of morphisms of grade 1, we state the following
definition.
  \begin{definition} \emph{ A graded symmetric monoidal functor  $(F,\widetilde{F}):\mathbb G\ri\mathbb G'$ is termed}
   {\it regular } \emph{if}

 $B_1.\ F(x)\otimes F(y) = F(x\otimes y),$

 $B_2.\ F( b)\otimes F( c)=F( b\otimes c),$

 $B_3.\ \widetilde{F}_{x,y}=\widetilde{F}_{y,x}$,

 $B_4.\ F(\sigma x)=\sigma F(x)$,

 $B_5.\ F(\sigma b)=\sigma F(b)$,\\
\emph{ where $x,y\in$ Ob$\mathbb G,$ $b,c$ are morphisms of grade 1
in $\mathbb G, \sigma\in \Gamma$.}
\end{definition}
The  graded symmetric monoidal functor mentioned in Lemma \ref{t1} is regular.

  \begin{lemma}\label{n1}
Let  $\mathbb G$, $\mathbb G'$ be corresponding braided strict $\Gamma $-graded  categorical groups associated to  braided $\Gamma$-crossed modules
$\mathcal M$, $\mathcal M'$, and let $(F,\widetilde{F}):\mathbb G \ri \mathbb G'$ be a regular graded symmetric monoidal functor.
Then, the triple  $(f_1,f_0,\varphi)$, where\\
\indent \emph{i)} $f_0(x)=F(x),\ (f_1(b),1)=F(b,1),\  \sigma\in\Gamma,b\in B, x \in D,$\\
\indent \emph{ii)} $p^*\varphi=f$, where $f$ is defined by  \eqref{01},\\
is a morphism in the category ${\bf _\Gamma BrCross}$.
\end{lemma}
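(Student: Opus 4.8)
\textbf{Proof proposal for Lemma \ref{n1}.}

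The plan is to reverse-engineer the analysis carried out in Lemma \ref{t1}: there, starting from a homomorphism $(f_1,f_0)$ we read off what $\varphi$ must be; here, starting from a regular graded symmetric monoidal functor $(F,\widetilde F)$ we must check that the data $(f_1,f_0,\varphi)$ it determines actually satisfy all the axioms of a morphism in ${\bf _\Gamma BrCross}$, namely that $(f_1,f_0)$ is a homomorphism of braided $\Gamma$-crossed modules ($H_1$--$H_3$ together with $\Gamma$-equivariance of $f_1,f_0$) and that $\varphi\in Z^2_{\Gamma,s}(\Coker d,\Ker d')$. First I would verify that $f_0$ is a group homomorphism: this is exactly condition $B_1$, since $x\otimes y=xy$ in both $\mathbb G$ and $\mathbb G'$ and $F$ is a functor on objects. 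Similarly $f_1$ is a group homomorphism by $B_2$ (using that addition in $B$ is $\otimes$ of grade-1 morphisms into $1$), and $\Gamma$-equivariance of $f_0$ and $f_1$ is precisely $B_4$ and $B_5$. Condition $H_1$ (that $f_0 d = d' f_1$) follows because $F$ takes the morphism $x\xrightarrow{b}1$ (which has $d(x\xrightarrow{b}1)=x$) to a morphism $f_0(x)\xrightarrow{f_1(b)}1$, whose $d'$-image is $f_0(x)$; one must also check $F$ preserves the distinguished grade-1 morphisms of the form $x\xrightarrow{b}1$, which it does since $F$ preserves grades and the target object $1$.

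Next I would extract $H_2$ and $H_3$ from the explicit formulas for $\vartheta$ and $\eta$ in the construction of $\mathbb G_{\mathcal M}$. Recall $\vartheta_y(x\xrightarrow{b}1) = (yxy^{-1}\xrightarrow{id_y+b+id_{y^{-1}}}1)$, i.e. it is built by tensoring the grade-1 morphisms $id_y$, $b$, $id_{y^{-1}}$; applying $F$ and using $B_2$ together with $F(id_y)=id_{F(y)}=id_{f_0(y)}$ gives $f_1(\vartheta_y b)=\vartheta'_{f_0(y)}f_1(b)$, which is $H_2$. Likewise $\eta(x,y)={\bf c}_{x,y}\otimes id_{x^{-1}}\otimes id_{y^{-1}}$, so applying $F$, using $B_2$ and the compatibility of $(F,\widetilde F)$ with the braiding constraint $\widetilde F_{y,x}\,{\bf c}_{F x,F y} = F({\bf c}_{x,y})\,\widetilde F_{x,y}$ together with $B_3$ (which forces the $\widetilde F$-terms to cancel), yields $f_1(\eta(x,y))=\eta'(f_0(x),f_0(y))$, which is $H_3$. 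At this point $(f_1,f_0)$ is a homomorphism of braided $\Gamma$-crossed modules.

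It remains to produce $\varphi$ and show $\varphi\in Z^2_{\Gamma,s}(\Coker d,\Ker d')$. For this I would simply invoke Lemma \ref{t1}: the function $f:D^2\cup(D\times\Gamma)\to B'$ defined by \eqref{01} from $(F,\widetilde F)$ is exactly the object analyzed there, and since we have just shown $(f_1,f_0)$ is a homomorphism of braided $\Gamma$-crossed modules with $F(x)=f_0(x)$ and $F(b,1)=(f_1(b),1)$, the hypotheses of Lemma \ref{t1} are met; hence $f=p^{\ast}\varphi$ for a unique normalized $\varphi\in Z^2_{\Gamma,s}(\Coker d,\Ker d')$ (uniqueness because $p$ is surjective so $p^\ast$ is injective on functions), and this is the $\varphi$ required in (ii). Thus $(f_1,f_0,\varphi)$ is a morphism in ${\bf _\Gamma BrCross}$.

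The main obstacle I anticipate is bookkeeping rather than conceptual: one must be careful that $F$ genuinely sends the subgroup $B=\{x\xrightarrow{b}1\}$ into $B'=\{x'\xrightarrow{b'}1\}$ (i.e. that $F(1)=1'$, which follows from $F$ being a monoidal functor on a strict categorical group since $1$ is the unique unit object and $F_\ast$ identifies $I'$ with $FI$), and that the grade-1 morphisms entering the formulas for $\vartheta$ and $\eta$ are handled via $B_2$ with the correct identification of $\otimes$ with the group operation $+$ on $B$, including the neutral morphisms $id_y$. Once these identifications are pinned down, every verification above is a direct substitution, and the deduction of the cocycle condition is outsourced wholesale to Lemma \ref{t1}.
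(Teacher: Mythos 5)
Your proposal is correct and follows essentially the same route as the paper's proof: deduce that $f_0,f_1$ are $\Gamma$-group homomorphisms from the regularity conditions $B_1,B_2,B_4,B_5$ and functoriality, read off $H_1$ from the source/target condition on the image of $db\xrightarrow{(b,1)}1$, obtain $H_2$ from $B_2$ and $H_3$ from the braiding-compatibility together with $B_3$, and then invoke Lemma \ref{t1} to produce $\varphi\in Z^2_{\Gamma,s}(\Coker d,\Ker d')$ with $f=p^{\ast}\varphi$. The only cosmetic difference is that the paper derives additivity of $f_1$ from preservation of composition of grade-1 morphisms rather than from $B_2$ restricted to morphisms into the unit object; both are valid since $\vartheta_1=id$.
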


\begin{proof}
Due to the conditions  $B_1$ and $B_4$, $f_0$ is a $\Gamma $-group homomorphism.
By the assumption that $F$ preserves the composition of two morphisms of grade 1 and by the condition $B_5$, $f_1$ is  a $\Gamma$-group homomorphism.
Any $b\in B$ can be considered as a morphism
$(db\stackrel{(b,1)}{\ri} 1)$ in $\mathbb G$, and hence
$(f_0(db)\stackrel{(f_1(b),1)}{\ri}1')$ is a morphism in
$\mathbb G'$, that is, the relation $H_1$ holds.
The relation $H_2$ follows from the condition $B_2$ and the homomorphism property of $f_1$.

According to the proof of Lemma \ref{t1}, the compatibility of $(F,\widetilde{F})$ with braiding constraints and the condition $B_3$ lead to the relation $H_3$.
So, $(f_1,f_0)$ is a homomorphism of braided crossed  $\Gamma$-modules. Thus, by Lemma \ref{t1},
the function $f$ determines a function
 $\varphi\in Z^2_{\Gamma,s}(\Coker d, \Ker d')$ such that $f=p^{*}\varphi,$ where $ p:D\ri \Coker d$ is a canonical projection.
Therefore,  $(f_1,f_0,\varphi )$ is a morphism in ${\bf _\Gamma BrCross}$.
\end{proof}


Denote by
 \[{\bf _\Gamma BrGr^*}\]
 the category of braided strict $\Gamma $-graded  categorical groups and regular graded symmetric monoidal functors,
we obtain the following result which is an extension of  Theorem \ref{pl}.
\begin{theorem}[\label{dlpl}Classification Theorem] There exists an equivalence
\[\begin{matrix}
 \Phi:{\bf _\Gamma BrCross}&\ri&{\bf _\Gamma BrGr^*},\\
B\ri D&\mapsto&\mathbb{G}_{B\ri D}\\
(f_1,f_0, \varphi )&\mapsto&(F,\widetilde{F})
\end{matrix}\]
 where $\ F(x)=f_0(x)$, $F(b,1)=(f_1(b),1)$, $F(x\stackrel{(0,\sigma)}{\rightarrow}\sigma x)=(\varphi (px,\sigma),\sigma )$,  $\widetilde{F}_{x,y}=$\linebreak$(\varphi (px,py), 1)$,
for $x\in D, b\in B,\sigma\in\Gamma$.
\end{theorem}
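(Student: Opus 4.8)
\textbf{Proof plan for Theorem \ref{dlpl}.}

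The plan is to exhibit $\Phi$ as a functor and then produce a quasi-inverse, checking the two composites are naturally isomorphic to the identities. First I would verify that $\Phi$ is well defined on objects: the whole preceding discussion (``Constructing the braided strict $\Gamma$-graded categorical group $\mathbb G=\mathbb G_{\mathcal M}$'') already shows that $\mathbb G_{B\ri D}$ is a braided strict $\Gamma$-graded categorical group, so this step is just a pointer back to that construction. Next, for a morphism $(f_1,f_0,\varphi)$ I would check that the prescribed data $F(x)=f_0(x)$, $F(b,1)=(f_1(b),1)$, $F(x\xrightarrow{(0,\sigma)}\sigma x)=(\varphi(px,\sigma),\sigma)$ and $\widetilde F_{x,y}=(\varphi(px,py),1)$ do assemble into a regular graded symmetric monoidal functor: that $F$ is a well-defined graded functor uses $H_1,H_2$ and the fact that an arbitrary morphism factors as $x\xrightarrow{(0,\sigma)}\sigma x\xrightarrow{(b,1)}y$; that $F$ preserves composition and that $\widetilde F$, together with $F$ on the $(0,\sigma)$'s, satisfies the monoidal coherence and braiding axioms is exactly the content of the cocycle identities \eqref{1s}--\eqref{4s} for $\varphi\in Z^2_{\Gamma,s}(\Coker d,\Ker d')$, read through Lemma \ref{t1}. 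Regularity ($B_1$--$B_5$) is immediate from the definition of $\mathbb G_{\mathcal M}$ and the fact that $\varphi$ takes values in $\Ker d'$. Functoriality of $\Phi$ (compatibility with the composition law \eqref{ht} on morphisms) is then a direct computation comparing $(f_1'f_1,f_0'f_0,(f_1')_\ast\varphi+(f_0)^\ast\varphi')$ with the composite monoidal functor, using that $F'$ acts on the grade-1 part by $f_1'$ and that pulling back along $p$ intertwines $(f_0)^\ast$ with precomposition by $F$.

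For the quasi-inverse I would use the assignment $\Psi$ sending a braided strict $\Gamma$-graded categorical group $\mathbb G$ to the braided $\Gamma$-crossed module constructed in the excerpt (``Constructing the braided $\Gamma$-crossed module associated to $\mathbb G$''), with $D=\Ob\mathbb G$, $B=\{x\xrightarrow{b}1\}$, $d$, $\vartheta$, $\eta$ as given there, and on morphisms sending a regular graded symmetric monoidal functor $(F,\widetilde F)$ to $(f_1,f_0,\varphi)$ as in Lemma \ref{n1}. Lemma \ref{n1} already guarantees this is a morphism in ${\bf _\Gamma BrCross}$; one still checks $\Psi$ respects composition and identities, again a routine diagram chase using \eqref{01}.

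Finally I would establish the two natural isomorphisms $\Psi\Phi\cong \mathrm{id}$ and $\Phi\Psi\cong \mathrm{id}$. The composite $\Psi\Phi$ applied to $\mathcal M=(B,D,d,\vartheta,\eta)$ returns a crossed module whose object group is $\Ob\mathbb G_{\mathcal M}=D$ and whose grade-1 morphism group $\{x\xrightarrow{(b,1)}1\mid d(b)=x\}$ is carried isomorphically onto $B$ by $(x\xrightarrow{(b,1)}1)\mapsto b$; under this identification $d$, $\vartheta$ and $\eta=\mathbf c_{x,y}\otimes id\otimes id$ go to the original $d$, $\vartheta$, $\eta$, so in fact $\Psi\Phi=\mathrm{id}$ on the nose (or at worst via this canonical iso), and it is compatible with morphisms by construction. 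For $\Phi\Psi$, given $\mathbb G$ one compares $\mathbb G$ with $\mathbb G_{\Psi\mathbb G}$: objects agree ($D=\Ob\mathbb G$), a $\sigma$-morphism $x\to y$ in $\mathbb G_{\Psi\mathbb G}$ is a pair $(b,\sigma)$ with $\sigma x=d(b)y$, and using the regular factor set $(F^\sigma,\theta)=(F^\sigma,id)$ of $\mathbb G$ (the isomorphisms $\Upsilon^\sigma_x:x\xrightarrow\sim\sigma x$ of Lemma \ref{bd01}) one builds an equivalence of braided $\Gamma$-graded categorical groups sending $(b,\sigma)$ to $(x\xrightarrow{b\otimes\Upsilon^\sigma\text{-data}}y)$; that this respects $\otimes$, $\mathbf a$, $\mathbf c$ and the grading follows from the factor-set axioms and regularity. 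The main obstacle is precisely this last equivalence $\mathbb G\simeq\mathbb G_{\Psi\mathbb G}$: it is not an equality, because the choice of the stable-grading isomorphisms $\Upsilon^\sigma_X$ is auxiliary, so one must check that different choices give monoidally naturally isomorphic functors and that the resulting comparison is natural in $\mathbb G$ with respect to regular graded symmetric monoidal functors. Everything else reduces to the bookkeeping already packaged in Lemmas \ref{bd01}, \ref{t1} and \ref{n1}.
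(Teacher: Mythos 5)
Your plan is correct and rests on the same two pillars as the paper's own proof, namely Lemma \ref{t1} (well-definedness and injectivity of $\Phi$ on homsets) and Lemma \ref{n1} (surjectivity on homsets); the difference is only in how the conclusion is packaged. The paper does not construct a quasi-inverse $\Psi$: it uses the criterion ``fully faithful plus essentially surjective'', and for essential surjectivity it simply asserts that for any braided strict $\Gamma$-graded categorical group $\mathbb G$ with associated braided $\Gamma$-crossed module $\mathcal M_{\mathbb G}$ one has $\Phi(\mathcal M_{\mathbb G})=\mathbb G$ (``not only isomorphic''), so none of your $\Psi\Phi$/$\Phi\Psi$ bookkeeping appears. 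Your worry about the auxiliary choices $\Upsilon^{\sigma}_X$ is the right one to raise, but in the paper's setting it largely dissolves: by definition a \emph{strict} $\Gamma$-graded categorical group comes with a regular symmetric factor set ($F^{\sigma}x=\sigma x$, $\theta^{\sigma,\tau}=id$), so the $\Gamma$-actions on objects and on grade-$1$ morphisms are part of the given data, $\mathcal M_{\mathbb G}$ is canonically determined, and writing a $\sigma$-morphism $x\to y$ as $\Upsilon^{\sigma}_x$ followed by a grade-$1$ morphism identifies it with a pair $(b,\sigma)$, recovering $\mathbb G$ itself rather than merely an equivalent copy. Your route buys an explicit, choice-aware quasi-inverse at the cost of verifying functoriality of $\Psi$ and naturality of $\Phi\Psi\cong\mathrm{id}$; the paper's route is shorter because the strictness hypothesis suppresses exactly those checks (and one could fairly say your version makes explicit what the paper's ``not only isomorphic'' leaves implicit).
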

\begin{proof}
Suppose that $\mathbb G,\mathbb G'$ are braided strict $\Gamma $-graded  categorical groups  associated to braided $\Gamma $-crossed modules  ${B\ri D}, {B'\ri D'}$, respectively.
By Lemma \ref{t1}, the correspondence $(f_1, f_0, \varphi)\mapsto (F,\widetilde{F})$ determines an injection on
the homsets
\[\Phi:\Hom_{{\bf _\Gamma BrCross}}(B\ri D, B'\ri D')\ri \Hom_{{\bf _\Gamma BrGr^*}}(\mathbb G,\mathbb G').\]
According to Lemma \ref{n1}, $\Phi$ is  surjective.

 If $\mathbb G$ is a braided strict $\Gamma $-graded  categorical group, and $\mathcal M_{\mathbb G}$ is its an associated braided $\Gamma $-crossed module, then $\Phi(\mathcal M_{\mathbb G})=\mathbb G$ (not only isomorphic). Therefore, $\Phi$ is an equivalence.
\end{proof}

 \begin{remark} In the above theorem, if $B\rightarrow D$ is a symmetric $\Gamma $-crossed module, then $\mathbb G_{B\rightarrow D}$ is a symmetric strict $\Gamma $-graded  categorical group. Let $\bf _\Gamma SymCross$ denote the full subcategory of the category  $\bf _\Gamma BrCross$ whose objects are symmetric crossed  $\Gamma$-modules, and let $\bf _\Gamma PiGr^*$ denote the full  subcategory of the category $\bf _\Gamma BrGr^*$  whose objects are symmetric strict $\Gamma $-graded categorical groups. Then these two subcategories are equivalent and the following  diagram commutes
 \begin{align*}
\begin{diagram}\xymatrix{{\bf _\Gamma SymCross} \ar[r]^{\Phi}\ar@{^(->}[d]_{J}& {\bf _\Gamma PiGr^*}\ar@{^(->}[d]^{J^*}\\
{\bf _\Gamma BrCross}\ar[r]_{\Phi}& {\bf _\Gamma BrGr^*},
 }\end{diagram}\end{align*}
where $J, J^*$ are full embedding functors.
 \end{remark}
 \begin{remark}
   When $\Gamma=1$ is a trivial group, then the categories  ${\bf _\Gamma BrCross}$ and $ {\bf _\Gamma BrGr^*}$ are the categories  ${\bf  BrCross}$ and ${\bf  BrGr^*}$, respectively. Therefore, we obtain Theorem \ref{pl}.
 \end{remark}

 \section {Classification of $\Gamma $-module extensions of the type of an abelian $\Gamma $-crossed module}
In this section, we present the theory of $\Gamma$-module extension of the type of an abelian $\Gamma$-crossed modules, which is analogous to the theory of group extension of the type of a crossed module \cite{Tay,Ded,Br94}.

In \cite{CCG}, if  $d:B\ri D$ is a homomorphism of abelian groups and  $D$ acts trivially on $B$, then
 $(B,D,d,0)$ is called an {\it abelian crossed module}. Let us note that any abelian crossed module is defined by a strict Picard category, that is, a symmetric categorical group in which ${\bf a}=id,$ ${\bf c}=id,$ ${\bf l}=id={\bf r}$ and for each object $x$, there is an object $y$ such that $x\otimes y=1).$

By an \emph{abelian} $\Gamma$-crossed module, we shall mean a braided  $\Gamma$-crossed module  \linebreak  $(B,D,d,\vartheta,\eta)$ that  $\vartheta=0$, $\eta=0$. Then $d$ is a homomorphism of $\Gamma $-modules.

 According to the construction in Section 4,
  each abelian $\Gamma$-crossed module $\mathcal M = (B,D,d)$ defines a $\Gamma $-graded category $\mathbb G_{\mathcal M}$ whose $\Ker \mathbb G$ is a strict Picard category. In this case, we say that $\mathbb G_{\mathcal M}$ is a  strict $\Gamma $-graded Picard category.
A \emph{homomorphism} $(f_1,f_0): (B,D,d)\rightarrow (B',D',d')$ of abelian $\Gamma$-crossed modules consists of   $\Gamma $-module homomorphisms  $f_1: B\rightarrow B'$ and $f_0: D\rightarrow D'$ such that
\[f_0d=d'f_1.\]

Note that in this section, since $B$ and $D$ are abelian groups, we write $+$ for the operations on $B$, $D$.
 \begin{definition}
\emph{Let $\mathcal M=(B, D,d)$ be an abelian $\Gamma$-crossed
module, and let  $Q$ be a $\Gamma$-module. A}  \emph{$\Gamma$-module
extension}  \emph{of $B$ by $Q$}  {\it of type} $\mathcal M$,
\emph{denoted by $\mathcal E_{d,Q}$, is a short exact sequence of
$\Gamma $-module homomorphisms,}
\[\mathcal E: 0\rightarrow B\stackrel{j}{\rightarrow}E\stackrel{p}{\rightarrow}Q\rightarrow 0,\]
\emph{and a homomorphism of abelian $\Gamma $-crossed modules
$(id,\varepsilon): (B\rightarrow E)\rightarrow (B\rightarrow D)$.}
\end{definition}

Two   extensions $\mathcal E_{d,Q}$ and $\mathcal E'_{d,Q}$
are said to be \emph{equivalent} if the following diagram commutes
\begin{align*}  \begin{diagram}
\xymatrix{ \mathcal E: 0 \ar[r]& B \ar[r]^j \ar@{=}[d] &E  \ar[r]^p
\ar[d]^ \alpha  & Q \ar[r]\ar@{=}[d] & 0,&\;\;\;\;E
\ar[r]^\varepsilon&D \\
 \mathcal E: 0 \ar[r]& B \ar[r]^{j'}  &E'  \ar[r]^{p'}   & Q \ar[r]& 0,&\;\;\;\;E
\ar[r]^{\varepsilon'}&D}
\end{diagram}
\end{align*}
and $\varepsilon' \alpha =\varepsilon$. Obviously, $ \alpha  $ is
an  isomorphism of $\Gamma $-modules.

Each extension $\mathcal E_{d,Q}$
induces a $\Gamma $-module homomorphism  $\psi: Q\rightarrow \Coker d
$  such
that $\psi p= q\varepsilon$, where $q: D\rightarrow \Coker d$ is a canonical projection.
Moreover, $\psi$  is
dependent only on the equivalence class of the extension $\mathcal E_{d,Q}$, and then we say that $\mathcal E_{d,Q}$ \emph{induces} $\psi$. The set
 of equivalence
classes of extensions $\mathcal E_{d,Q}$
 inducing $\psi:Q\rightarrow \Coker d$ is denoted by
\[\Ext_{\mathbb Z\Gamma }^{\mathcal M}(Q,B,\psi).\]

Now, in order to study this set we apply the obstruction theory for graded symmetric monoidal functors between  strict $\Gamma $-graded  Picard categories  $ \Dis_{\Gamma,s} Q$ and $\mathbb G_{B\ri D}$, where the  {\it discrete} $\Gamma $-graded  Picard category   $ \Dis_{\Gamma,s} Q$ is defined by
 (see Subsection 2.2)
\[\Dis_{\Gamma,s} Q =\int_{\Gamma}(Q, 0, 0).\]
It is just the strict $\Gamma $-graded  Picard category associated to the abelian $\Gamma $-crossed module $(0,Q,0)$ (see Section 4).
\begin{lemma}\label{mrtc}
Let $\mathcal M = (B, D, d)$ be an abelian $\Gamma $-crossed module, $Q$ be a $\Gamma $-module and
$\psi: Q\rightarrow  \Coker d$ be a $\Gamma $-module homomorphism. Then for each graded symmetric monoidal functor
$(F,\widetilde{F}):\;\Dis _{\Gamma,s}Q\rightarrow  \mathbb G_{\mathcal M}$ which satisfies $F(0)=0$ and induces the pair of  $\Gamma $-module homomorphisms $(\psi,0):(Q,0)\rightarrow  (\Coker d,\Ker
d)$, there exists an extension $\mathcal E_{d,Q}$
inducing $\psi$.
\end{lemma}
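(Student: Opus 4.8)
The plan is to unwind the given graded symmetric monoidal functor $(F,\widetilde{F}):\Dis_{\Gamma,s}Q\ri\mathbb G_{\mathcal M}$ into concrete cocycle data and then use that data to build the middle term $E$ of the desired extension. First I would record what the functor amounts to after applying Lemma \ref{t1} and the ``Remark on notations'' in Section 4: writing each morphism $0\xrightarrow{(0,\sigma)}\sigma\cdot 0=0$ of $\Dis_{\Gamma,s}Q$ and each tensor comparison, the functor defines a normalized function $f:Q^2\cup(Q\times\Gamma)\ri B$, since $\Ker d'$ in the target is $\Ker d=B$ here (as $\vartheta=0,\eta=0$, so $d$ need not be injective but $\Ker d$ is all that the comparison isomorphisms of grade $1$ can land in). By Lemma \ref{t1} (applied with source the abelian $\Gamma$-crossed module $(0,Q,0)$) the function $f$ satisfies the relations \eqref{1s}--\eqref{4s}, i.e. $f\in Z^2_{\Gamma,s}(Q,B)$ — this is exactly the symmetric $2$-cocycle of $\Gamma$-modules of \cite{CK} that classifies such extensions.

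Next I would construct $E$ directly from this cocycle. On objects the functor sends $x\in Q$ to some $F(x)\in D$; I would instead work with the ``underlying set'' description and set $E=B\times Q$ as a set, with addition twisted by the $Q^2$-part of $f$:
\[(b,x)+(b',x')=(b+b'+f(x,x'),\,x+x'),\]
and $\Gamma$-action twisted by the $(Q\times\Gamma)$-part of $f$:
\[\sigma(b,x)=(\sigma b+f(x,\sigma),\,\sigma x).\]
The cocycle conditions \eqref{1s}, \eqref{3s}, \eqref{2s}, \eqref{4s} are precisely what is needed for these to define, respectively, an associative addition, a genuine $\Gamma$-action, compatibility of the action with the addition, and commutativity of $E$; normalization of $f$ gives the zero element and makes $j:B\ri E$, $b\mapsto(b,0)$ and $p:E\ri Q$, $(b,x)\mapsto x$ into $\Gamma$-module homomorphisms fitting into a short exact sequence $\mathcal E:0\ri B\ri E\ri Q\ri 0$.

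Then I would supply the crossed-module part of the extension datum, namely the homomorphism $(id,\varepsilon):(B\ri E)\ri(B\ri D)$. Here $\varepsilon:E\ri D$ is forced by the object map of $F$: set $\varepsilon(b,x)=d(b)+F(x)$ (using additive notation in $D$), which is a $\Gamma$-module homomorphism because $F$ is a $\Gamma$-graded functor carrying the grade-$\sigma$ arrow at $x$ to $F(x\xrightarrow{(0,\sigma)}\sigma x)=(f(x,\sigma),\sigma)$, forcing $\sigma F(x)=d(f(x,\sigma))+F(\sigma x)$; one checks $\varepsilon j=d$ and that the square with $id_B$ commutes. Finally, since $F$ induces $(\psi,0)$ on $(\pi_0,\pi_1)$, the induced homomorphism $q\varepsilon:E\ri\Coker d$ kills $j(B)$ and hence factors through $p$ as $\psi p$; thus $\mathcal E_{d,Q}$ induces $\psi$, as required.

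The main obstacle I expect is bookkeeping rather than conceptual: one must make sure the identification of $f$ with an element of $Z^2_{\Gamma,s}(Q,B)$ really uses $\Dis_{\Gamma,s}Q=\int_\Gamma(Q,0,0)$ correctly (all associativity/braiding constraints on the source are trivial, so no extra $h$-terms appear in \eqref{1s}--\eqref{4s}) and that the hypotheses $F(0)=0$ and ``induces $(\psi,0)$'' are exactly what force normalization of $f$ and the factorization $q\varepsilon=\psi p$. A secondary point needing care is that $d$ is not assumed injective, so $\Ker d$ may be nonzero; one must verify that the grade-$1$ comparison isomorphisms $\widetilde{F}_{x,y}$ and $F(0,\sigma)$ of the target landing ``in $\Ker d'$'' in the proof of Lemma \ref{t1} still applies verbatim with $\Ker d'$ replaced by $\Ker d\subseteq B=\pi_1\mathcal M$, which it does since those isomorphisms have grade $1$ and endomorphisms of an object of grade $1$ in $\mathbb G_{\mathcal M}$ are exactly elements of $\Ker d$.
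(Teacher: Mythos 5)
Your construction follows the paper's proof exactly: extract a normalized function $f:Q^2\cup(Q\times\Gamma)\rightarrow B$ from $(F,\widetilde{F})$ via \eqref{01}, form the crossed product $E=B\times_f Q$ with the twisted sum and $\Gamma$-action, set $\varepsilon(b,u)=db+Fu$, and check $q\varepsilon=\psi p$. That part is fine.

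There is, however, one concretely wrong assertion, repeated in your opening and closing paragraphs: that the grade-$1$ comparison isomorphisms land in $\Ker d$ (and even that ``$\Ker d=B$''). Neither holds in general. A grade-$1$ morphism $x\rightarrow y$ in $\mathbb G_{\mathcal M}$ is a pair $(b,1)$ with $x=db+y$, and $\widetilde{F}_{u,v}:F(u)+F(v)\rightarrow F(u+v)$ is an endomorphism of a single object only when $F$ is additive on objects, which is not assumed here. What actually holds — and what the paper records as the relations \eqref{mt1} and \eqref{mt2} — is that $f$ takes values in all of $B$, subject to $\sigma F(u)=df(u,\sigma)+F(\sigma u)$ and $F(u)+F(v)=df(u,v)+F(u+v)$; these two identities are precisely what make your $\varepsilon$ additive and $\Gamma$-equivariant (you state the first when discussing $\varepsilon$, silently contradicting your $\Ker d$ claim, but never the second). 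The same confusion undermines your appeal to Lemma \ref{t1}: that lemma concerns functors with $F(x)=f_0(x)$ for a crossed-module homomorphism $f_0$, and it is exactly this hypothesis that pushes $f$ into $\Ker d'$ there; it does not apply verbatim to the present $F$. You must instead rederive the cocycle identities \eqref{1s}--\eqref{4s} directly from the coherence and naturality of $(F,\widetilde{F})$ (the computations of Lemma \ref{t1} do transfer, since $B$ is abelian so centrality is automatic, but the conclusion ``$f$ lands in $\Ker d$'' does not). With these corrections your argument coincides with the paper's.
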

Such an extension $\mathcal E_{d,Q}$ is called
\emph{associated} to the graded symmetric monoidal functor $(F,\widetilde{F})$.
\begin{proof}

Suppose that $(F,\widetilde{F}):\; \Dis_{\Gamma,s} Q\rightarrow  \mathbb G_\mathcal M$ is  a  graded  symmetric  monoidal functor. Then $(F,\widetilde{F})$ determines a function  $f: Q^2 \cup ( Q\times \Gamma )\rightarrow
B$ by \eqref{01},
\[(f(u,v),1 )= \widetilde{F}_{u,v},\;\;(f(u,\sigma),\sigma )=F(u\stackrel{(0,\sigma)}{\longrightarrow} \sigma u ).\]

The function $f$  is ``normalized'' in the sense that
\begin{align*}\label{eq54} f(u,1_\Gamma)=0,f(u,0)=0=f(0,v).\end{align*}

Since $F$ preserves the identity morphism, one has the first equality. The later equalities follow from the assumption $F(0)=0$ and the compatibility of $(F,\widetilde{F})$ with unit constraints.
It follows from the definition of a morphism in $\mathbb G$ that
\begin{equation} \sigma F(u)=df(u,\sigma)+F(\sigma u),\label{mt1}
\end{equation}
\begin{equation} F(u)+F(v)=df(u,v)+F(u+v).\label{mt2}
\end{equation}
The function $f$ defined as above is just a 2-cocycle in $Z^2_{\Gamma ,s}(Q,B)$. 

From the 2-cocycle $f$, we construct an exact
sequence of $\Gamma$-modules
 \[\mathcal E_F:\;\ 0\rightarrow  B\stackrel{j_0}{\rightarrow}E_0\stackrel{p_0}{\rightarrow}Q\rightarrow  0,\]
 where $E_0$ is the crossed product  extension  $B\times_f Q$ and
$ j_0(b)=(b,1)$,  $p_0(b,u)= u,$ for $b\in B, u\in Q.$ The $\Gamma $-module structure of $E_0$ is given by
\[(b,u)+(c,v)= (b+c + f(u,v),u+v),\]
\[\sigma (b,u)=(\sigma b+f(u,\sigma),\sigma u).\]

Now we determine $\Gamma$-module homomorphism $\varepsilon:E_0\ri D$. By the assumption, $(F,\widetilde{F})$ induces a $\Gamma$-module homomorphism
$\psi:Q\ri \Coker d $  by $\psi(u)=[Fu]\in \Coker d$. Thus, the element $Fu$ is a representative of $\Coker d$ in $D$.
Then for $(b,u)\in E_0$, we set
\begin{equation}\label{ep}
\varepsilon(b,u)=db+Fu.
\end{equation}
Therefore, $\varepsilon$ is a
 $\Gamma $-module homomorphism thanks to the relations (\ref{mt1}) and (\ref{mt2}).
It is easy to see that $\varepsilon\circ j_0=d$. Further, this extension induces $\Gamma $-module homomorphism $\psi: Q\rightarrow \Coker d$, since
\[\varepsilon(b,u)=q(db+Fu)= q(Fu)= \psi(u)=\psi p_0(b,u),\]
for all $u\in Q$.
 \end{proof}

\begin{theorem}[Schreier  Theory  for $\Gamma $-module extensions of the type of an abelian $\Gamma $-crossed module] \label{schr} Let  $\mathcal M=(B,D,d)$ be an abelian $\Gamma $-crossed module, and let $\psi: Q\rightarrow \Coker d$ be  a $\Gamma$-module homomorphism. There exists a bijection
\[
\Omega:\mathrm{Hom}_{(\psi,0)}[\Dis_{\Gamma,s} Q,\mathbb G_{\mathcal M}]\rightarrow \mathrm{Ext}^{\mathcal M}_{\mathbb Z\Gamma}(Q, B,\psi).\]
\end{theorem}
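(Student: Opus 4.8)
The plan is to construct the bijection $\Omega$ explicitly and produce an inverse, using the obstruction-theoretic machinery of Subsection 2.2 together with Lemma \ref{mrtc}. First I would recall that $\Dis_{\Gamma,s}Q = \int_\Gamma(Q,0,0)$ is already in reduced form, while $\mathbb G_{\mathcal M}$ is the strict $\Gamma$-graded Picard category attached to $(B,D,d)$; its invariants are $\pi_0\mathbb G_{\mathcal M} = \Coker d$, $\pi_1\mathbb G_{\mathcal M} = \Ker d$, with associated 3-cocycle cohomologous to the trivial one (since $\Ker\mathbb G_{\mathcal M}$ is a \emph{strict} Picard category). By Proposition \ref{dl2.1a}, any graded symmetric monoidal functor $(F,\widetilde F):\Dis_{\Gamma,s}Q\to\mathbb G_{\mathcal M}$ of type $(\psi,0)$ is, up to passing to reduced pictures, a functor between $\int_\Gamma(Q,0,0)$ and $\int_\Gamma(\Coker d,\Ker d,0)$ of type $(\psi,0)$; its obstruction is $\psi^\ast 0 - 0_\ast 0 = 0$, so it is realizable and, by Proposition \ref{dl2.2a}, homotopy classes of such functors are classified by $H^2_{\Gamma,s}(Q,\Ker d)$. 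This is the numerical backbone: I would then match this classifying group against $\Ext^{\mathcal M}_{\mathbb Z\Gamma}(Q,B,\psi)$.

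Next I would define $\Omega$ on the level of objects: given a graded symmetric monoidal functor $(F,\widetilde F):\Dis_{\Gamma,s}Q\to\mathbb G_{\mathcal M}$ of type $(\psi,0)$ with $F(0)=0$, normalize it within its homotopy class so that the hypotheses of Lemma \ref{mrtc} hold, and set $\Omega[(F,\widetilde F)] := [\mathcal E_F]$, the class of the associated $\Gamma$-module extension $\mathcal E_F: 0\to B\xrightarrow{j_0}E_0\xrightarrow{p_0}Q\to 0$ with $E_0 = B\times_f Q$ and $\varepsilon(b,u)=db+Fu$, exactly as built in the proof of Lemma \ref{mrtc}. I would check that $\Omega$ is well defined on homotopy classes: a graded symmetric monoidal natural equivalence $\theta:(F,\widetilde F)\Rightarrow(F',\widetilde F')$ supplies, via its component function on objects, a $\Gamma$-module map $g:Q\to B$ with $f'(u,v)-f(u,v) = \delta g$ and $f'(u,\sigma)-f(u,\sigma)$ the corresponding coboundary term, so that $(b,u)\mapsto(b+g(u),u)$ is an equivalence $\mathcal E_F\simeq\mathcal E_{F'}$ compatible with $\varepsilon,\varepsilon'$ (here one uses \eqref{3.4}). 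The key computation is that $\varepsilon'$ on the shifted crossed product equals $\varepsilon$ because $dg(u)$ is absorbed into $db$; this is where the relations \eqref{mt1}, \eqref{mt2} are used once more.

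Then I would build the inverse $\Omega^{-1}$. Given an extension $\mathcal E_{d,Q}: 0\to B\xrightarrow{j}E\xrightarrow{p}Q\to 0$ together with $(\mathrm{id},\varepsilon):(B\to E)\to(B\to D)$ inducing $\psi$, choose a normalized set-theoretic $\Gamma$-equivariant-up-to-coboundary section $s:Q\to E$ of $p$; define $f(u,v)=s(u)+s(v)-s(u+v)\in B$ and $f(u,\sigma)=\sigma s(u)-s(\sigma u)\in B$, check $f\in Z^2_{\Gamma,s}(Q,B)$ (this reproves, in reverse, the statement in Lemma \ref{mrtc} that $f$ is a symmetric 2-cocycle), and use $f$ to build $\widetilde F_{u,v}=(f(u,v),1)$, $F(u\xrightarrow{(0,\sigma)}\sigma u)=(f(u,\sigma),\sigma)$, $F(u)=\varepsilon s(u)$. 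One verifies this is a graded symmetric monoidal functor of type $(\psi,0)$ with $F(0)=0$, essentially because $\varepsilon$ and $d$ intertwine the exact sequence with $B\to D$. Finally I would show $\Omega$ and $\Omega^{-1}$ are mutually inverse: $\Omega^{-1}\Omega$ recovers the crossed product $B\times_f Q$ which is canonically isomorphic over $B$ and $Q$ to any extension with cocycle $f$, and $\Omega\Omega^{-1}$ returns the original $(F,\widetilde F)$ up to the homotopy coming from a change of section.

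The main obstacle I anticipate is bookkeeping the $\Gamma$-equivariance throughout: unlike the non-equivariant Schreier theory, a section $s$ cannot be chosen $\Gamma$-equivariant on the nose, so the second component $f(u,\sigma)$ genuinely enters, and one must be careful that the coboundary relations relating two choices of section match precisely the coherence conditions \eqref{3.4} for a graded symmetric monoidal natural equivalence (and not merely an ungraded one). A secondary subtlety is confirming that $\varepsilon' \alpha = \varepsilon$ is exactly the extra datum that corresponds to ``induces $\psi$'' on the functor side, so that the fibers over a fixed $\psi$ on both sides correspond; this forces the normalization $F(u)=\varepsilon s(u)$ rather than an arbitrary lift of $\psi(u)$, and all the compatibility checks hinge on that choice. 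Once these are handled, the bijection is forced and one may further remark that, transported through Propositions \ref{dl2.1a} and \ref{dl2.2a}, it identifies $\Ext^{\mathcal M}_{\mathbb Z\Gamma}(Q,B,\psi)$ with $H^2_{\Gamma,s}(Q,\Ker d)$ whenever a functor of type $(\psi,0)$ exists, which is the statement of the subsequent classification Theorem \ref{dlc}.
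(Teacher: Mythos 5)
Your proposal is correct and follows essentially the same route as the paper: $\Omega$ is defined via the associated extension of Lemma \ref{mrtc}, a homotopy $\alpha_u=(g(u),1)$ is converted into the equivalence $(b,u)\mapsto(b+g(u),u)$ (and conversely), and the inverse is built from a normalized section $s:Q\to E$ yielding the factor set $f$ and the functor $F(u)=\varepsilon s(u)$, $\widetilde F_{u,v}=(f(u,v),1)$. The only quibble is that $g$ is merely a normalized function $Q\to B$ whose coboundary measures $f'-f$, not itself a $\Gamma$-module map; this does not affect the argument.
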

\begin{proof}
{\it Step 1: Graded symmetric monoidal functors $(F,\widetilde{F}) $,
$(F',\widetilde{F}') $ are homotopic if and only if the
corresponding associated extensions $\mathcal E_{d,Q}, \mathcal E'_{d,Q}$  are equivalent.}

Suppose that $F, F':\;\Dis_{\Gamma,s} Q\rightarrow \mathbb G_\mathcal M$ are homotopic
by a homotopy  $\alpha:F\rightarrow  F'$. Then, there is a function  $g:Q\rightarrow  B$ such that $\alpha_u=(g(u),1)$, that is,
\begin{equation}\label{dkmt}
F(u)=dg(u)+F'(u).
\end{equation}
The naturality and the coherence condition (\ref{3.4}) of the homotopy $\alpha$  lead to $g(0)=0$ and
\begin{equation}\label{ttn}
f(u,\sigma)+g(\sigma u)=\sigma g(u)+f'(u,\sigma),
\end{equation}
\begin{equation}\label{dkk}
f(u,v)+g(u+v)=g(u)+ g(v)+f'(u,v).
\end{equation}

According to Lemma \ref{mrtc}, there exist the
extensions $\mathcal E_{d,Q}$ and $ \mathcal E'_{d,Q}$ associated to $F$ and $F'$, respectively.
Then, thanks to the relations (\ref{ttn}) and (\ref{dkk}), the map
\begin{align*} \alpha^\ast: E_F\rightarrow E_{F'},\quad
(b,u)\mapsto (b+g(u),u) \end{align*}
is a homomorphism of $\Gamma$-modules. Further, $\alpha^\ast$  is an isomorphism. The equality $\varepsilon'\alpha^\ast=\varepsilon$ is implied from the relations \eqref{ep} and \eqref{dkmt}:
\begin{align*}\varepsilon'\alpha^\ast(b,u)&=\varepsilon'(b+g(u),u)=d(b+g(u))+F'u\\
&=d(b)+d(g(u))+F'u=d(b)+Fu=\varepsilon(b,u).
\end{align*}
Therefore, two extensions $\mathcal E_{d,Q}$ and $ \mathcal E'_{d,Q}$ are equivalent.

Now, suppose that $\mathcal E_{d,Q}$ and $ \mathcal E'_{d,Q}$ are two extensions associated to $(F,\widetilde{F})$ and $(F',\widetilde{F}')$, respectively.
If $\alpha^\ast: E_F\rightarrow  E_{F'}$ is an equivalence  of these extensions, then it is straightforward to see that
 \[\alpha^\ast(b,u)=(b+g(u),u),\]
  where $g:Q\rightarrow
B$ is a function with $g(0)=0$.
By retracing our steps, $\alpha_u=(g(u),1)$ is a homotopy between $(F,\widetilde{F})$  and $(F',\widetilde{F}')$.

{\it Step 2: $\Omega$ is surjective.}

 Assume that $\mathcal E = \mathcal E_{d,Q}$ is an extension of type $\mathcal M$.
  We prove that  $\mathcal E$  defines
  a graded symmetric  monoidal functor $(F,\widetilde{F}):\;\Dis_{\Gamma,s} Q\ri \mathbb G_{\mathcal M}$.
For any $u\in Q$, choose a representative $e_u\in E$ such that
$p(e_u)=u,\;e_0=0$.  Each element of $E$ can be represented uniquely  as $b+e_u $ for $b\in B, u\in Q$.
The representatives $\left\{e_u\right\}$ induce a normalized function $f:Q^2\cup (Q\times\Gamma)\ri B$ by
\begin{equation}    e_u+e_v=f(u,v)+e_{u+v},\label{eq3'}\end{equation}
\begin{equation}
\sigma e_u=f(u,\sigma )+e_{\sigma u}.\label{eq4'} \end{equation}

 Now, we construct a graded symmetric monoidal functor  $(F,\widetilde{F}):$\linebreak
$\Dis_{\Gamma,s} Q\ri \mathbb G_{\mathcal M}$ as follows. Since $\psi(u)=\psi p(e_u)=q\varepsilon(e_u)$, $\varepsilon(e_u)$ is a representative of $\psi(u)$ in $D$. Thus, we set
 \[F(u)=\varepsilon(e_u),\;\;F(u\stackrel{\sigma}{\rightarrow}\sigma u)=(f(u,\sigma ),\sigma),\;\; \widetilde{F}_{u,v}=(f(u,v),1).\]
 The relations \eqref{eq3'} and \eqref{eq4'} show that
  $F(\sigma )$ and $\widetilde{F}_{u,v} $ are morphisms in $\mathbb G$, respectively.
 The associativity and commutativity  laws and the $\Gamma $-group property of  $B$ show that $f\in Z^2_{\Gamma ,s}(Q,B)$, and hence $(F,\widetilde{F})$  is a graded symmetric monoidal functor of type
 $(\psi,0)$.

 Now, let $\mathcal E_F$ be  an extension associated to $(F,\widetilde{F})$, then $\mathcal E_F \cong \mathcal E$ by $\alpha:(b,u)\mapsto b+ e_u$.
 \end{proof}
Let $\mathbb G$ be a $\Gamma $-graded Picard category associated to
an abelian $\Gamma$-crossed module $B\stackrel{d}{\rightarrow}D$.
Since $\pi_0\mathbb G=\Coker d$ and $\pi_1\mathbb G=\Ker d$, it
follows from  Subsection 2.2 that the reduced $\Gamma $-graded
Picard category $\mathbb  G(h)$ of $\mathbb G$ is of the form
 \[\mathbb G(h)=\int_{\Gamma}(\Coker d,\Ker d,h), \; h\in
Z^{3}_{\Gamma,s}(\Coker d,\Ker d).\]
Then $\Gamma $-module homomorphism $\psi: Q\rightarrow \Coker d$ induces an
 obstruction
\[\psi^\ast h\in Z^3_{\Gamma ,s}(Q,\Ker d).\]

We now use this  notion of obstruction to state and prove the following
theorem.
\begin{theorem}\label{dlc} Let  $\mathcal M =(B,D,d)$ be an abelian $\Gamma $-crossed module, and let
 $\psi: Q\rightarrow \Coker d$ be a  homomorphism of $\Gamma $-modules. Then, the vanishing of $\overline{\psi^\ast h}$ in $H^3_{\Gamma,s} (Q, \Ker d)$ is  necessary and sufficient for there to exist an extension $\mathcal E_{d,Q}$ of type  $\mathcal M$ inducing $\psi$. Further, if $\overline{\psi^\ast h}$ vanishes, then the set of equivalence classes of such extensions is bijective with
$H^2_{\Gamma,s}(Q,\Ker d)$.
\end{theorem}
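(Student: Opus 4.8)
The plan is to combine the Schreier theory of Theorem~\ref{schr} with the obstruction machinery of Proposition~\ref{dl2.2a}. First I would observe that by Theorem~\ref{schr} there is a bijection
\[
\Omega:\mathrm{Hom}_{(\psi,0)}[\Dis_{\Gamma,s}Q,\mathbb G_{\mathcal M}]\rightarrow \mathrm{Ext}^{\mathcal M}_{\mathbb Z\Gamma}(Q,B,\psi),
\]
so the existence of an extension $\mathcal E_{d,Q}$ of type $\mathcal M$ inducing $\psi$ is equivalent to the existence of \emph{some} graded symmetric monoidal functor $(F,\widetilde F):\Dis_{\Gamma,s}Q\rightarrow\mathbb G_{\mathcal M}$ of type $(\psi,0)$. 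I would then replace both sides by their reduced graded Picard categories: $\Dis_{\Gamma,s}Q=\int_\Gamma(Q,0,0)$ is already reduced, while $\mathbb G_{\mathcal M}$ is equivalent to $\mathbb G(h)=\int_\Gamma(\Coker d,\Ker d,h)$ with $h\in Z^3_{\Gamma,s}(\Coker d,\Ker d)$, as recalled just before the statement. By Proposition~\ref{dl2.1a}, homotopy classes of graded symmetric monoidal functors of type $(\psi,0)$ from $\Dis_{\Gamma,s}Q$ to $\mathbb G_{\mathcal M}$ correspond to homotopy classes of graded functors of type $(\psi,0)$ from $\int_\Gamma(Q,0,0)$ to $\int_\Gamma(\Coker d,\Ker d,h)$.

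Next I would compute the obstruction of the graded functor $F$ of type $(\psi,0)$. By the formula $k=\varphi^\ast h'-f_\ast h$ from Subsection~2.2, with source 3-cocycle $h'=0$ and with $\varphi=\psi$, $f=0$, the obstruction is
\[
k=\psi^\ast 0-0_\ast h=-0\cdot h\ \text{(in the second variable)}=\psi^\ast h,
\]
more precisely the obstruction class is $\overline{\psi^\ast h}\in H^3_{\Gamma,s}(Q,\Ker d)$ (the sign is irrelevant in cohomology). Now Proposition~\ref{dl2.2a} applies verbatim: the graded functor of type $(\psi,0)$ is realizable — i.e.\ admits isomorphisms $\widetilde F$ making it a graded symmetric monoidal functor — if and only if $\overline{\psi^\ast h}$ vanishes in $H^3_{\Gamma,s}(Q,\Ker d)$. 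This gives the ``necessary and sufficient'' half of the theorem, since by Theorem~\ref{schr} an extension exists precisely when such a functor exists.

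For the counting statement, assuming $\overline{\psi^\ast h}=0$, Proposition~\ref{dl2.2a} gives a bijection
\[
\mathrm{Hom}_{(\psi,0)}[\int_\Gamma(Q,0,0),\int_\Gamma(\Coker d,\Ker d,h)]\ \leftrightarrow\ H^2_{\Gamma,s}(Q,\Ker d).
\]
Composing with the equivalence $\mathbb G_{\mathcal M}\simeq\mathbb G(h)$ (which induces a bijection on homotopy classes of monoidal functors of a fixed type, by Proposition~\ref{dl2.1a}) and then with $\Omega$ from Theorem~\ref{schr}, I obtain a bijection between the set of equivalence classes of extensions $\mathcal E_{d,Q}$ of type $\mathcal M$ inducing $\psi$ and $H^2_{\Gamma,s}(Q,\Ker d)$, as claimed.

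\textbf{Main obstacle.} The routine parts are the identification of the obstruction cocycle and the bookkeeping of types; the one point requiring genuine care is checking that the equivalence $\mathbb G_{\mathcal M}\simeq\mathbb G(h)$ interacts correctly with the ``type $(\psi,0)$'' and ``$F(0)=0$'' normalizations so that $\Omega$ and Proposition~\ref{dl2.2a} can be composed without the homsets changing — i.e.\ that passing to the reduced category neither creates nor destroys homotopy classes of the relevant functors. This is where I would spend the most effort, invoking Proposition~\ref{dl2.1a}(i) and the fact (noted after Proposition~\ref{dl2.2a}) that $H^2_{\Gamma,s}=H^2_{\Gamma,ab}$ to keep the cohomology groups consistent throughout.
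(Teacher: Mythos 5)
Your proposal is correct and follows essentially the same route as the paper: both directions come from combining the Schreier bijection of Theorem \ref{schr} (together with Lemma \ref{mrtc}) with the passage to reduced categories via Proposition \ref{dl2.1a} and the realizability/classification criterion of Proposition \ref{dl2.2a}, your only real deviation being that you deduce necessity directly from the bijectivity of $\Omega$ rather than, as the paper does, building the categorical group of $(B,E,j)$ and a functor from it to $\mathbb G_{\mathcal M}$. One small slip: in the obstruction formula $k=\varphi^{\ast}h'-f_{\ast}h$ the primed cocycle is that of the \emph{target}, so here the source cocycle is $0$ and $h'$ is the theorem's $h$, giving $k=\psi^{\ast}h-0_{\ast}0=\psi^{\ast}h$ directly; your intermediate expression $\psi^{\ast}0-0_{\ast}h$ has the roles reversed, though you land on the correct class $\overline{\psi^{\ast}h}$.
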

\begin{proof}
By the assumption  $\overline{\psi^\ast
h}=0$, it follows from Proposition \ref{dl2.2a}
that
there is a graded symmetric monoidal functor $(\Psi,\widetilde{\Psi}):{\Dis_{\Gamma,s} Q}\rightarrow  \mathbb G(h)$. Then the composition of $(\Psi,\widetilde{\Psi})$ and the canonical  graded symmetric monoidal functor
 $(H,\widetilde{H}): \mathbb G(h)\rightarrow  \mathbb G$    is a graded symmetric monoidal functor
$(F,\widetilde{F}):\;\Dis_{\Gamma,s} Q\rightarrow
\mathbb G$, and hence by Lemma \ref{mrtc}, we obtain an associated extension
$\mathcal E_{d,Q}$.

Conversely, suppose that \[\mathcal E: 0\rightarrow B\stackrel{j}{\rightarrow} E \stackrel{p}{\rightarrow}Q\rightarrow 0\]
is a $\Gamma $-module extension of  type  $\mathcal M$ inducing $\psi$.
  Let $ \mathbb G'$  be a  strict $\Gamma $-graded Picard category associated to the
abelian $\Gamma $-crossed module
$(B, E,j)$. Then, according to Proposition \ref{t1}, there is a
  graded symmetric monoidal functor $F:\mathbb G' \rightarrow  \mathbb G$. Since
the reduced $\Gamma $-graded Picard category of $\mathbb G'$  is $\Dis_{\Gamma,s} Q$, it follows from Proposition \ref{dl2.1a} that
  $F$ induces a
graded symmetric monoidal functor of type
$(\psi,0)$ from $(Q,0,0)$ to $(\Coker d,\Ker d,h)$. Now,
thanks to Proposition \ref{dl2.2a}, the obstruction of the pair $(\psi,0)$ vanishes  in $H^3_{\Gamma,s}(Q, \Ker d),$ i.e., $\overline{\psi^\ast
h}=0$.

The final assertion  of Theorem \ref{dlc}  is obtained  from Theorem \ref{schr}.  First, there is a natural bijection
\[\Hom[\Dis_{\Gamma,s} Q, \mathbb G]\leftrightarrow \Hom\Dis_{\Gamma,s} Q, \mathbb G(h)].\]
Then, since $\pi_0(\Dis_{\Gamma,s} Q)=Q, \pi_1\mathbb G(h)=\Ker
d$, the bijection
\[\mathrm{Ext}_{\mathbb Z\Gamma}^{\mathcal M}(Q, B,\psi)\leftrightarrow
H^2_{\Gamma ,s}(Q,\Ker d)\]
follows from Theorem \ref{schr} and Proposition \ref{dl2.2a}.
\end{proof}

We now consider the special case when $\mathcal M=(B, \Aut B, 0)$ is an abelian $\Gamma$-crossed module. Then, each $\Gamma$-module extension of type $\mathcal M$ inducing $\psi:Q\ri \Aut B$ is just an extension of $\Gamma$-modules,
\[0\ri B\ri E\ri Q\ri 0,\]
inducing $\psi$.
Thus, Theorem \ref{dlc} leads to the following consequence.
\begin{corollary} [\cite{CK2007}, Theorem 2.4]  Let $B,Q$ be $\Gamma$-modules, and let $\psi:Q\ri \Aut B$ be a  $\Gamma$-module homomorphism.  Then, there is an obstruction  class $\overline{k} \in H^3_{\Gamma ,s}  (Q, B)$ whose vanishing is  necessary and sufficient for there to exist a $\Gamma $-module extension of $B$ by $Q$ inducing $\psi$. Further, if
$\overline{k} $ vanishes, then there exists a bijection
\[\Ext_{\mathbb Z\Gamma}(Q,B,\psi)\leftrightarrow H^2_{\Gamma,s}(Q,B).\]
\end{corollary}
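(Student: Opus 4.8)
The plan is to deduce the corollary by specializing Theorem \ref{dlc} to the abelian $\Gamma$-crossed module $\mathcal M = (B, \Aut B, 0)$, in which $d = 0: B \ri \Aut B$ is the zero homomorphism. The first step is to record the two invariants of this $\mathcal M$: since $d=0$, its kernel is all of $B$ and its image is trivial, so $\Ker d = B$ and $\Coker d = \Aut B$, with the canonical projection $q: \Aut B \ri \Coker d$ being the identity. Consequently the hypothesis ``$\psi: Q \ri \Coker d$ is a homomorphism of $\Gamma$-modules'' of Theorem \ref{dlc} becomes exactly ``$\psi: Q \ri \Aut B$ is a homomorphism of $\Gamma$-modules,'' which is the hypothesis of the corollary.

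Next I would verify, as asserted in the paragraph preceding the corollary, that for this particular $\mathcal M$ the notion of a $\Gamma$-module extension of type $\mathcal M$ inducing $\psi$ collapses to that of an ordinary $\Gamma$-module extension $0 \ri B \ri E \ri Q \ri 0$ inducing $\psi$. The point is that the accompanying homomorphism of abelian $\Gamma$-crossed modules $(\mathrm{id}, \varepsilon): (B \ri E) \ri (B \ri \Aut B)$ carries the compatibility condition $\varepsilon \circ j = d \circ \mathrm{id} = 0$, so $\varepsilon$ annihilates $j(B)$ and factors through $E/B \cong Q$; the induced map coincides with $\psi$ because $\psi p = q\varepsilon = \varepsilon$. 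Thus $\varepsilon = \psi \circ p$ is completely determined by $\psi$, the crossed-module datum adds nothing once $\psi$ is fixed, and the equivalence classes of such extensions are precisely the elements of $\Ext_{\mathbb Z\Gamma}(Q, B, \psi)$.

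With these identifications in hand the corollary is read off from Theorem \ref{dlc}. By the discussion in Subsection 2.2, the reduced $\Gamma$-graded Picard category of $\mathbb G_{\mathcal M}$ has the form $\mathbb G(h) = \int_{\Gamma}(\Coker d, \Ker d, h) = \int_{\Gamma}(\Aut B, B, h)$ with $h \in Z^3_{\Gamma,s}(\Aut B, B)$, so the obstruction $\psi^{\ast} h$ lives in $Z^3_{\Gamma,s}(Q, \Ker d) = Z^3_{\Gamma,s}(Q, B)$; I would set $\overline{k} := \overline{\psi^{\ast} h} \in H^3_{\Gamma,s}(Q, B)$. Theorem \ref{dlc} asserts that an extension of type $\mathcal M$ inducing $\psi$ exists if and only if $\overline{\psi^{\ast} h}$ vanishes, which under the identification above is exactly the statement that the vanishing of $\overline{k}$ is necessary and sufficient for the existence of a $\Gamma$-module extension inducing $\psi$. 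The second clause of Theorem \ref{dlc}, the bijection between equivalence classes and $H^2_{\Gamma,s}(Q, \Ker d)$, substitutes directly to the desired $\Ext_{\mathbb Z\Gamma}(Q, B, \psi) \leftrightarrow H^2_{\Gamma,s}(Q, B)$.

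The only genuinely non-formal point, and where I expect to have to be careful, is the identification carried out in the second step: one must confirm that the short exact sequence together with the redundant datum $\varepsilon$ carries no more information than the bare $\Gamma$-module extension, and in particular that the equivalence relation on extensions of type $\mathcal M$ (a commuting ladder together with the constraint $\varepsilon' \alpha = \varepsilon$) reduces to the usual equivalence of $\Gamma$-module extensions once $\varepsilon = \psi p$ is forced on every representative. Granting this, the rest is a direct substitution of $\Ker d = B$ and $\Coker d = \Aut B$ into Theorem \ref{dlc}.
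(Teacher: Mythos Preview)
Your proposal is correct and follows exactly the approach the paper takes: the paper's entire argument for the corollary is the one-sentence remark preceding it, namely that specializing Theorem \ref{dlc} to $\mathcal M=(B,\Aut B,0)$ collapses extensions of type $\mathcal M$ to ordinary $\Gamma$-module extensions inducing $\psi$. Your write-up simply supplies the details the paper leaves implicit (that $\Ker d=B$, $\Coker d=\Aut B$, that $\varepsilon=\psi\circ p$ is forced, and that the equivalence relations match), so there is nothing to compare.
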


{}

\end{document}